 \newtheorem{thm}{Theorem}[section]
 \newtheorem{cor}[thm]{Corollary}
 \newtheorem{lem}[thm]{Lemma}
 \theoremstyle{definition}
 \newtheorem{defn}[thm]{Definition}
 \theoremstyle{remark}
 \newtheorem{rem}[thm]{Remark}
 \newtheorem*{ex}{Example}
 \numberwithin{equation}{section}
\begin{document}

%-------------------------------------------------------------------------
% editorial commands: to be inserted by the editorial office
%
%\firstpage{1} \volume{228} \Copyrightyear{2004} \DOI{003-0001}
%
%
%\seriesextra{Just an add-on}
%\seriesextraline{This is the Concrete Title of this Book\br H.E. R and S.T.C. W, Eds.}
%
% for journals:
%
%\firstpage{1}
%\issuenumber{1}
%\Volumeandyear{1 (2004)}
%\Copyrightyear{2004}
%\DOI{003-xxxx-y}
%\Signet
%\commby{inhouse}
%\submitted{March 14, 2003}
%\received{March 16, 2000}
%\revised{June 1, 2000}
%\accepted{July 22, 2000}
%
%
%
%---------------------------------------------------------------------------
%Insert here the title, affiliations and abstract:
%

\title[An effective metric on $C(H,K)$ with normal structure]
 {An effective metric on $C(H,K)$ with normal structure}

%----------Author 1
\author[Mona Nabiei]{Mona Nabiei}

\address{%
Department of Mathematics, \\
Shahid Beheshti University, G. C.\\
P.O. Box 19839\\
Tehran, IRAN}

\email{m$_-$nabiei@sbu.ac.ir}

%\thanks{This work was completed with the support of our\TeX-pert.}
%----------Author 2
%\author{Taha Ezzati G.}
%\address{Department of Mathematics, \br
%Shahid Beheshti University, G. C.\br
%P.O. Box 19839\br
%Tehran, IRAN}
%\email{dont@know.who.knows}
%----------classification, keywords, date
\subjclass{47L60; 47B38; 54E99}

\keywords{Closed operator, Fixed point, h-biholomorphic, Metrically convex, Normal structure}

\date{Apr. 3, 2014}
%----------additions
%\dedicatory{To my boss}
%%% ----------------------------------------------------------------------

\begin{abstract}
This study first defines a new metric with normal structure on $C(H,K)$ and then a new technique to prove fixed point theorems for families of non-expansive maps on this metric space.
Indeed, it shows that the presence of a bounded orbit implies the existence of a fixed point for a group of h-biholomorphic automorphisms on $C(H,K)$.  
\end{abstract}

%%% ----------------------------------------------------------------------
\maketitle
%%% ----------------------------------------------------------------------
%\tableofcontents
\section{Introduction}
The theory of bounded operators on a Hilbert space is an important branch of mathematics; however, all operators that arise naturally are not bounded. It is important to study unbounded operators that provide abstract frameworks for dealing with differential operators and unbounded observables in quantum mechanics, among others. 

The Kobayashi metric play a crucial role in the study of holomorphic maps on open unit ball of any complex Banach space \cite{fran}. One feature of this metric is its invariability for holomorphic maps. The present study defines a metric on $C(H,K)$ with similar properties.

The paper defines a suitable metric $d$ on $C(H,K)$, the set of all closed densely-defined linear operators from Hilbert space $H$ to finite dimensional Hilbert space $K$. Metric geometric properties, such as compactness of the class of admissible sets, metric convexity, and normal structure are examined. Then it is shown that the presence of a bounded orbit implies the existence of an invariant admissible set.

The h-biholomorphic maps are introduced at the end of this paper. Then, it is shown that the presence of a bounded orbit implies the existence of a fixed point for a group of h-biholomorphic automorphisms on $C(H,K)$.
\section{Preliminary Notes}
Let $X$ be a complex Banach space and let $B$ be the open unit ball in $X$. Let the Poincar\'e metric $\omega$ on $\Delta$, the open unit disc in the complex plane $\mathbb{C}$, be given by
$$\omega(a,b)=\tanh^{-1}\frac{|a-b|}{|1-\bar{a}b|} \ \ \ \ \ |a|, |b|<1.$$
%Then the Carath\'eodory pseudo-metric $C_B$ on $B$ is defined by
%$$C_B(x,y)=\sup\{\omega(f(x),f(y)):~ f:B\rightarrow \Delta ~ is ~ holomorphic\}.$$

Let $x,y$ be two points of $B$. An analytic chain joining $x$ and $y$ in $B$ consists of $2n$ points $z'_1,z''_1,...,z'_n,z''_n$ in $\Delta$, and of $n$ holomorphic functions $f_k:\Delta\rightarrow B$, such that 
$$f_1(z'_1)=x, \dots , f_k(z''_k)=f_{k+1}(z'_{k+1}) \ \ \ \ for~ k=1, \dots , n-1,\ \ f_n(z''_n)=y.$$
Since $B$ is connected, given $x$ and $y$, an analytic chain joining $x$ and $y$ in $B$ always exists, provided that $n$ is sufficiently large. Let
$$K_B(x,y)=\inf\{\omega(z'_1,z''_1)+\omega(z'_2,z''_2)+\dots +\omega(z'_n,z''_n)\},$$
where the infimum is taken over all choices of analytic chains joining $x$ and $y$ in $B$. It is called the Kobayashi pseudo-metric on $B$. 

The next theorems from \cite{fran} will be used later on.

\begin{thm}\label{iso}
Let $B$ be the open unit ball of the complex Banach space $X$. If $F:B\rightarrow B$ is a bi-holomorphic map, then
$$K_B(F(x),F(y))=K_B(x,y),\ \ \ for~all~x,y\in B.$$
\end{thm}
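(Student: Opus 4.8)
The plan is to reduce the statement to the more basic fact that the Kobayashi pseudo-metric is non-expansive (distance-decreasing) under arbitrary holomorphic self-maps, and then to apply this fact to both $F$ and $F^{-1}$, which are both holomorphic because $F$ is bi-holomorphic.

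First I would establish the distance-decreasing property: if $G:B\rightarrow B$ is any holomorphic map, then $K_B(G(x),G(y))\leq K_B(x,y)$ for all $x,y\in B$. The key observation is that an analytic chain is transported by composition without changing the parameter points in $\Delta$. Concretely, given any analytic chain joining $x$ and $y$ in $B$ --- consisting of points $z'_1,z''_1,\dots,z'_n,z''_n\in\Delta$ and holomorphic functions $f_k:\Delta\rightarrow B$ --- I would set $g_k=G\circ f_k$. Each $g_k:\Delta\rightarrow B$ is holomorphic, and since $G$ is applied to both sides of each linking relation, the identities $g_1(z'_1)=G(x)$, $g_k(z''_k)=g_{k+1}(z'_{k+1})$, and $g_n(z''_n)=G(y)$ all follow at once. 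Thus $\{g_k\}$ together with the \emph{same} points $\{z'_k,z''_k\}$ is an analytic chain joining $G(x)$ and $G(y)$, whose associated Poincar\'e sum $\sum_k \omega(z'_k,z''_k)$ is identical to that of the original chain.

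Next I would take the infimum. Since every chain for $(x,y)$ produces a chain for $(G(x),G(y))$ of the same cost, for each such chain one has $K_B(G(x),G(y))\leq \omega(z'_1,z''_1)+\dots+\omega(z'_n,z''_n)$; passing to the infimum over all chains joining $x$ and $y$ then yields $K_B(G(x),G(y))\leq K_B(x,y)$.

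Finally I would specialize to the bi-holomorphic case. Applying the inequality to $F$ gives $K_B(F(x),F(y))\leq K_B(x,y)$. Applying it to the holomorphic inverse $F^{-1}$ at the points $F(x),F(y)$ gives $K_B(x,y)=K_B\bigl(F^{-1}(F(x)),F^{-1}(F(y))\bigr)\leq K_B(F(x),F(y))$. Combining the two inequalities delivers the desired equality. I do not anticipate a serious obstacle; the only point requiring care is the verification that composition preserves the chain relations while leaving the parameter points --- and hence the Poincar\'e sum --- untouched, which is precisely what forces the cost to be preserved rather than merely bounded, and thereby gives equality in both directions.
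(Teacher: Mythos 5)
Your argument is correct: the push-forward of an analytic chain under a holomorphic self-map $G$ is again an analytic chain with the same parameter points in $\Delta$, hence the same Poincar\'e cost, giving $K_B(G(x),G(y))\leq K_B(x,y)$, and applying this to both $F$ and $F^{-1}$ yields the equality. Note that the paper itself offers no proof of this statement --- it is quoted from Franzoni--Vesentini \cite{fran} --- and your argument is exactly the standard one found there, so there is nothing to compare beyond observing that you have supplied the omitted proof correctly.
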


\begin{thm}\label{all metric}
Let $B$ be the open unit ball of a complex Banach space $X$. Then
$$K_B(0,x)=\omega(0,\|x\|).$$
\end{thm}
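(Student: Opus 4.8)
The plan is to prove the identity by establishing the two inequalities $K_B(0,x)\le\omega(0,\|x\|)$ and $K_B(0,x)\ge\omega(0,\|x\|)$ separately. The upper bound will come from exhibiting a single explicit holomorphic disc, while the lower bound will exploit the distance-decreasing behaviour of the Kobayashi pseudo-metric under holomorphic maps, applied to a norm-attaining linear functional supplied by Hahn--Banach.

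For the upper bound, I would assume $x\neq0$ (the case $x=0$ being trivial) and consider the single holomorphic map $f:\Delta\rightarrow B$ defined by $f(\zeta)=\zeta\,x/\|x\|$. Since $\|f(\zeta)\|=|\zeta|<1$ for $\zeta\in\Delta$, this indeed maps into $B$, and it satisfies $f(0)=0$ and $f(\|x\|)=x$. Thus the single pair $(z_1',z_1'')=(0,\|x\|)$ together with $f$ forms an analytic chain of length one joining $0$ and $x$, and inserting this chain into the infimum defining $K_B$ gives $K_B(0,x)\le\omega(0,\|x\|)$.

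For the lower bound, I would first record the distance-decreasing property: if $g:B\rightarrow B'$ is holomorphic then $K_{B'}(g(u),g(v))\le K_B(u,v)$, which follows directly from the definition by composing any analytic chain for $(u,v)$ in $B$ with $g$ to produce an analytic chain for $(g(u),g(v))$ in $B'$ having the same sum of Poincar\'e lengths. Specializing to $B'=\Delta$ and invoking the Schwarz--Pick lemma (which yields $K_\Delta=\omega$, since the identity chain gives $K_\Delta\le\omega$, while the contraction of $\omega$ under holomorphic self-maps of $\Delta$ together with the triangle inequality gives the reverse), I would then choose by Hahn--Banach a continuous linear functional $\ell:X\rightarrow\mathbb{C}$ with $\|\ell\|=1$ and $\ell(x)=\|x\|$. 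Because $|\ell(y)|\le\|y\|<1$ for every $y\in B$, the map $\ell$ sends $B$ into $\Delta$ and is holomorphic, so the distance-decreasing property gives $\omega(0,\|x\|)=\omega(\ell(0),\ell(x))=K_\Delta(\ell(0),\ell(x))\le K_B(0,x)$.

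Combining the two inequalities yields the claim. The only genuinely delicate point is the identity $K_\Delta=\omega$ underlying the lower bound: it is not immediate from the chain definition and rests on the Schwarz--Pick lemma, namely that every holomorphic self-map of $\Delta$ is $\omega$-nonexpansive, so that no analytic chain can shorten the Poincar\'e distance between its endpoints. Everything else is a routine verification that the proposed functions are holomorphic and land in the correct balls.
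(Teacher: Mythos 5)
Your proof is correct. The paper itself does not prove this statement---it is quoted from the reference [Franzoni--Vesentini] without proof---and your two-inequality argument (the explicit disc $\zeta\mapsto\zeta x/\|x\|$ for the upper bound, and a norm-attaining Hahn--Banach functional combined with the distance-decreasing property and the Schwarz--Pick identification $K_\Delta=\omega$ for the lower bound) is exactly the standard proof given in that source. You are also right to flag $K_\Delta=\omega$ as the one non-trivial ingredient; your justification of it is sound.
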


Let $H$, $K$ be complex Hilbert spaces. We denote the open unit ball of $L(K,H)$, the space of all bounded linear operators from $K$ to $H$, by $\mathcal{B}$.   For each $A \in\mathcal{B}$, we define a transformation $\eta$ on $\mathcal{B}$ setting
\begin{equation}\label{linear transformation}
\eta(Z)=(I-AA^*)^{-\frac{1}{2}}(Z+A)(I+A^*Z)^{-1}(I-A^*A)^{\frac{1}{2}}.
\end{equation}
We collect the facts about this  transformation that we need in the following lemma.
\begin{lem}\label{llt} If $\eta$ is as in (\ref{linear transformation}), then $\eta$ has the following properties:

\begin{itemize}
\item [(i)]$\eta$ is invertible and its inverse is given by
$$\eta^{-1}(Z)=(I-AA^*)^{-\frac{1}{2}}(Z-A)(I-A^*Z)^{-1}(I-A^*A)^{\frac{1}{2}}.$$
\item[(ii)] $\eta$ is a biholomorphic mapping on $\mathcal{B}$.
\item[(iii)] If $\dim(K)<\infty$, then $\eta$ is WOT-continuous.
\end{itemize}
\end{lem}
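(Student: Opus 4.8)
The plan is to treat $\eta$ as the operator analogue of a disc automorphism and to exploit the intertwining relations that $A$ satisfies with functions of $A^*A$ and $AA^*$. Writing $P=(I-AA^*)^{1/2}$ on $H$ and $Q=(I-A^*A)^{1/2}$ on $K$, both are positive and invertible because $\|A\|<1$, and from $A^*(AA^*)^n=(A^*A)^nA^*$ one gets, via the functional calculus, $A^*g(AA^*)=g(A^*A)A^*$ and $Ag(A^*A)=g(AA^*)A$ for every continuous $g$ on the spectra (which lie in $[0,\|A\|^2]\subset[0,1)$); in particular $A^*P=QA^*$, $AQ=PA$, and the analogues with the inverses and with $(I-AA^*)^{-1}$, $(I-A^*A)^{-1}$. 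I would also record at the outset that $I+A^*Z$ is invertible for every $Z\in\mathcal B$, since $\|A^*Z\|\le\|A\|\,\|Z\|<1$, so that $\eta(Z)$ is well defined and equals $P^{-1}(Z+A)(I+A^*Z)^{-1}Q$.

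For (i) the two things to verify are that $\eta$ maps $\mathcal B$ into $\mathcal B$ and that the displayed $\eta^{-1}$ undoes it. The first, which is in my view the real content, rests on the operator identity
$$I-\eta(Z)^*\eta(Z)=Q(I+Z^*A)^{-1}(I-Z^*Z)(I+A^*Z)^{-1}Q,$$
which I would obtain by substituting the definition of $\eta(Z)$, carrying the middle factor $(I-AA^*)^{-1}$ across $A$ and $A^*$ using the intertwining relations, and collecting terms. Since $\|Z\|<1$ gives $I-Z^*Z\ge(1-\|Z\|^2)I>0$ and the outer factors $Q$ and $(I+A^*Z)^{-1}$ are invertible, the right-hand side is bounded below by a positive multiple of $I$, so $\|\eta(Z)\|<1$. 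Replacing $A$ by $-A$ shows the candidate inverse likewise maps $\mathcal B$ into $\mathcal B$. Finally $\eta^{-1}\circ\eta=\mathrm{id}$ follows by direct substitution together with the same intertwining simplifications; equivalently, writing $\eta$ as the linear-fractional action $Z\mapsto(aZ+b)(cZ+d)^{-1}$ with $a=P^{-1}$, $b=P^{-1}A$, $c=Q^{-1}A^*$, $d=Q^{-1}$ reduces the claim to composing this action with the one for $-A$, a short block computation from the intertwining relations.

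Part (ii) is then quick. The maps $Z\mapsto Z+A$ and the left and right multiplications by the fixed bounded operators $P^{-1}$ and $Q$ are affine, hence holomorphic, while $Z\mapsto(I+A^*Z)^{-1}$ is holomorphic because operator inversion is holomorphic on the open set of invertibles and $I+A^*Z$ is invertible throughout $\mathcal B$. A composition and product of holomorphic maps between open subsets of Banach spaces is holomorphic, so $\eta$ is holomorphic; by (i) its inverse has the identical form with $A$ replaced by $-A$ and is holomorphic for the same reason, whence $\eta$ is biholomorphic.

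For (iii) the point is that $\dim K<\infty$ collapses all the operator topologies on $L(K,K)$. Suppose $Z_\alpha\to Z$ in WOT with $Z_\alpha,Z\in\mathcal B$. For the $K\to K$ operators one has $\langle A^*Z_\alpha x,e_j\rangle=\langle Z_\alpha x,Ae_j\rangle\to\langle A^*Zx,e_j\rangle$ for basis vectors $e_j$ of $K$, so every matrix entry of $A^*Z_\alpha$ converges; finite-dimensionality upgrades this to norm convergence, and since inversion is norm-continuous on invertibles, $(I+A^*Z_\alpha)^{-1}Q\to(I+A^*Z)^{-1}Q$ in norm. On the other side $Z_\alpha+A\to Z+A$ in WOT and, crucially, these operators are uniformly bounded, $\|Z_\alpha+A\|<1+\|A\|$, because we remain inside $\mathcal B$; the splitting $W_\alpha S_\alpha-WS=W_\alpha(S_\alpha-S)+(W_\alpha-W)S$ then shows that right-multiplying a uniformly bounded WOT-convergent net by a norm-convergent net preserves WOT-convergence, while left-multiplication by the fixed operator $P^{-1}$ preserves it trivially. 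Hence $\eta(Z_\alpha)\to\eta(Z)$ in WOT. The main obstacle is precisely this step: without $\dim K<\infty$ the inner factor would converge only weakly and inversion is not WOT-continuous, so finite-dimensionality of $K$ is exactly what makes the argument go through.
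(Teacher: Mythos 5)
Your proof is correct, and it is substantially more self-contained than the paper's, so it is worth comparing the two routes. For part (i) the paper's entire effort goes into the direct verification that $\eta(\eta^{-1}(Z))=Z$, using the commutation identity $(Z-A)(I-A^*A)^{-1}(I-A^*Z)=(I-ZA^*)(I-AA^*)^{-1}(Z-A)$ to move the middle factor past $A$ and $A^*$; your intertwining relations $A^*g(AA^*)=g(A^*A)A^*$ are exactly the general form of that trick, and the one step you leave as ``direct substitution,'' namely $\eta^{-1}\circ\eta=\mathrm{id}$, is precisely the computation the paper writes out in full --- routine given the intertwining, so not a gap. What you add, and the paper omits entirely, is the identity $I-\eta(Z)^*\eta(Z)=Q(I+Z^*A)^{-1}(I-Z^*Z)(I+A^*Z)^{-1}Q$ (which I checked: expanding $(I+Z^*A)(I-A^*A)^{-1}(I+A^*Z)-(Z^*+A^*)(I-AA^*)^{-1}(Z+A)$ and commuting does collapse to $I-Z^*Z$), so you actually prove that $\eta$ maps $\mathcal{B}$ into $\mathcal{B}$, a fact the paper only absorbs implicitly by citing Harris. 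For (ii) and (iii) the paper simply cites Harris's theorem~2 and Krein, respectively, whereas you give direct arguments: holomorphy of inversion on the invertibles plus composition for (ii), and for (iii) the key observation that WOT-convergence of $Z_\alpha$ forces \emph{norm} convergence of the $K\to K$ factor $(I+A^*Z_\alpha)^{-1}Q$ because $L(K)$ is finite-dimensional, after which a uniformly bounded WOT-convergent net times a norm-convergent net converges in WOT. That is exactly the mechanism the paper gestures at with its remark that the operator topologies collapse when $\dim K<\infty$, and your version makes explicit why finite-dimensionality of $K$ (and not of $H$) is what the argument needs.
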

\begin{proof}
To prove statement (i), take any $Z\in\mathcal{B}$ and insert
$$Y=(I-AA^*)^{-\frac{1}{2}}(Z-A)(I-A^*Z)^{-1}(I-A^*A)^{\frac{1}{2}}.$$
We shall show  that $\eta(Y)=Z$. To do this we should note that
\begin{equation}\label{commut}
Y=(I-AA^*)^{\frac{1}{2}}(I-ZA^*)^{-1}(Z-A)(I-A^*A)^{-\frac{1}{2}}.
\end{equation}
For, (\ref{commut}) is equivalent to the identity
$$(Z-A)(I-A^*A)^{-1}(I-A^*Z)=(I-ZA^*)(I-AA^*)^{-1}(Z-A),$$
which can be easily verified by a straightforward calculation. 
Now, we evaluate the operator $I+A^*Y$:
\begin{eqnarray}
I+A^*Y&=&I+A^*(I-AA^*)^{\frac{1}{2}}(I-ZA^*)^{-1}(Z-A)(I-A^*A)^{-\frac{1}{2}}\nonumber\\
~&=&I+(I-A^*A)^{\frac{1}{2}}(I-A^*Z)^{-1}(A^*Z-A^*A)(I-A^*A)^{-\frac{1}{2}}\nonumber\\
%~&=&I+(I-A^*A)^{\frac{1}{2}}(I-A^*Z)^{-1}(A^*Z-I+I-A^*A)(I-A^*A)^{-\frac{1}{2}}\nonumber\\
~&=&I-I+(I-A^*A)^{\frac{1}{2}}(I-A^*Z)^{-1}(I-A^*A)^{\frac{1}{2}}\nonumber\\
~&=&(I-A^*A)^{\frac{1}{2}}(I-A^*Z)^{-1}(I-A^*A)^{\frac{1}{2}}.\nonumber
\end{eqnarray} 
From this equation we see at once that
\begin{eqnarray}
\eta(Y)&=&(I-AA^*)^{-\frac{1}{2}}(Y+A)(I-A^*A)^{-\frac{1}{2}}(I-A^*Z)\nonumber\\
%~&=&(I-ZA^*)^{-1}(Z-A)(I-A^*A)^{-1}(I-A^*Z)+A(I-A^*A)^{-1}(I-A^*A)\nonumber\\
%~&=&Z(I-A^*Z)^{-1}(I-A^*A)^{-1}(I-A^*Z)-ZA^*(I-ZA^*)^{-1}A(I-A^*A)^{-1}(I-A^*Z)\nonumber\\
~&=&[Z(I-A^*Z)^{-1}-ZA^*(I-ZA^*)^{-1}A](I-A^*A)^{-1}(I-A^*Z)\nonumber\\
~&=&Z(I-A^*Z)^{-1}[I-A^*A](I-A^*A)^{-1}(I-A^*Z)\nonumber\\
~&=&Z.\nonumber
\end{eqnarray}

Statement (ii) was shown in theorem 2 of \cite{harr}.
To prove statement (iii): It is easy to check that if $K$ is finite dimensional, then the norm topology of $L(K,H)$ coincides with the strong operator topology while WOT coincides with the weak topology of $L(K,H)$. In this case, the WOT-continuity of $\eta$ was noticed and used by Krein \cite{krei}. 
\end{proof}

Now, we denote the space of all closed densely-defined linear operators from $H$ to $K$ by $C(H,K)$. The first relaxation in the concept of operator is to not assume that the operators are defined everywhere on $H$. Hence,  densely-defined operator $T:H\rightarrow K$ is a linear function whose domain of definition is dense linear subspace $\mathcal{D}(T)$  in $H$. 
$T$ is closed if its graph, $\mathcal{G}(T)$, is a closed subset of space $H\oplus K$ \cite{schm}.

Let $T\in C(H,K)$ and define $L_T$ and $R_T$ settings as: 
$$L_T(X)=(I+T^*T)^{\frac{1}{2}}X^*-T^*(I+XX^*)^\frac{1}{2},$$
$$R_T(X)=(I+XX^*)^\frac{1}{2}(I+TT^*)^\frac{1}{2}-XT^*.$$

Consider operator $X$ such that the compositions are well-defined. Using lemma 1.10 of Schm\"udgen \cite{schm}, recall that $\mathcal{G}(T^*)=V(\mathcal{G}(T))^{\perp}$ where $V(x,y)=(-y,x)$, $x\in H$, $y\in K$. Hence, $K\bigoplus H=\mathcal{G}(T^*)\bigoplus V(\mathcal{G}(T))$. Therefore, for each $u\in H$, there exist $x\in\mathcal{D}(T)$ and $y\in\mathcal{D}(T^*)$ such that $y-Tx=0$, $T^*y+x=u$. That is, $I+T^*T$ is surjective.  $T^*T$ is a positive self-adjoint operator and, for $x\in \mathcal{D}(T^*T)$: 
$$\|(I+T^*T)x\|^2=\|x\|^2+\|T^*Tx\|^2+2\|Tx\|^2,$$
hence, $I+T^*T$ is a bijective mapping with a positive bounded self-adjoint inverse on $H$ such that:
$$0\leq (I+T^*T)^{-1}\leq I.$$  

On the other hand: 
$$(I+T^*T)^{-1}(H)=\mathcal{D}(I+T^*T)=\mathcal{D}(T^*T),$$
hence:
\begin{eqnarray}
\|T(I+T^*T)^{-1}x\|^2 &=& \langle T^*T(I+T^*T)^{-1}x,(I+T^*T)^{-1}x\rangle  \nonumber\\
~&\leq & \langle T^*T(I+T^*T)^{-1}x,(I+T^*T)^{-1}x\rangle \nonumber\\
~&~& \ \ \ \   +  \langle (I+T^*T)^{-1}x,(I+T^*T)^{-1}x\rangle \nonumber\\
~&= &  \langle(I+T^*T)(I+T^*T)^{-1}x,(I+T^*T)^{-1}x\rangle \nonumber\\
~&=&\langle x,(I+T^*T)^{-1}x\rangle =\|(I+T^*T)^{-\frac{1}{2}}x\|^2,  \nonumber
\end{eqnarray}
that is:\\
\centerline{$\|T(I+T^*T)^{-\frac{1}{2}}y\|\leq\|y\|$\ \  \ for $y\in (I+T^*T)^{-\frac{1}{2}}(H)$.}
Since $(I+T^*T)^{-\frac{1}{2}}$ is bijective, $(I+T^*T)^{-\frac{1}{2}}H$ is dense in $H$. Operator $T(I+T^*T)^{-\frac{1}{2}}$ is closed since $T$ is closed and $(I+T^*T)^{-\frac{1}{2}}$ is bounded \cite{schm}. This implies that $\mathcal{D}(T(I+T^*T)^{-\frac{1}{2}})=H$, and $\|T(I+T^*T)^{-\frac{1}{2}}\|\leq1$. A similar argument shows that $\|(I+T^*T)^{-\frac{1}{2}}T^*\|\leq 1$; however, if $K$ is finite dimensional, operator $TT^*$ is bounded and 
$$\|T(I+T^*T)^{-\frac{1}{2}}\|^2=\|TT^*(I+TT^*)^{-1}\|=\frac{\|TT^*\|}{1+\|TT^*\|}<1.$$

It is known that $\|f(TT^*)\|=f(\|TT^*\|)$ if function $f$ is non-decreasing on the interval $[0,\|TT^*\|]$ and $TT^*$ is a positive operator; thus, if $K$ is a finite dimensional Hilbert space, the inverse of the operator $R_S(T)$ exists, and: 
$$R_S(T)^{-1}=(I+SS^*)^{-\frac{1}{2}}[I-T(I+T^*T)^{-\frac{1}{2}}(I+S^*S)^{-\frac{1}{2}}S^*]^{-1}(I+TT^*)^{-\frac{1}{2}}.$$ 

\begin{rem}
From the above it follows that if $K$ is finite dimensional and $T\in C(H,K)$, then $\hat{T}=(I+T^*T)^{-\frac{1}{2}}T^*\in\mathcal{B}$. If $A\in\mathcal{B}$, then $\ker(I-A^*A)={0}$. Because, if $A^*A(x)=x$ then 
$$\langle A^*A(x),x\rangle=\langle x,x\rangle,$$
which meanse $\|A(x)\|=\|x\|$ and $x$ must be zero. So, $A_0=(I-A^*A)^{-\frac{1}{2}}A^*$ is a closed densely-defined operator from $H$ to $K$, such that $\hat{A_0}=A$.
\end{rem}

\begin{lem}\label{mobius}
If  $T\in C(H,K)$, then $\psi_T$, by the following definition, is of the form of \ref{linear transformation}, $\psi_T(\hat{T})=0$, $\psi_T(0)=-\hat{T}$ and $\psi^{-1}_T=\psi_{-T}$.\\ For each $X\in \mathcal{B}$,  
$\psi_T(X)=L_T(Y)R_Y(T)^{-1}$ where $Y=(I-X^*X)^{-\frac{1}{2}}X^*$.
\end{lem}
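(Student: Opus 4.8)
The plan is to show that $\psi_T$ coincides with the transformation $\eta$ of the form (\ref{linear transformation}) for the single choice $A=-\hat T$, where $\hat T=(I+T^*T)^{-\frac12}T^*\in\mathcal{B}$ (so that $A=-\hat T\in\mathcal{B}$, since $\mathcal{B}$ is symmetric); all four assertions will then drop out, the last one via Lemma \ref{llt}. The only tools needed are the spectral-calculus intertwining identities $f(TT^*)\,T=T\,f(T^*T)$ and $T^*f(TT^*)=f(T^*T)T^*$ for the closed operator $T$, together with the same relations for $X$.

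First I would put the defining expression $\psi_T(X)=L_T(Y)R_Y(T)^{-1}$ into closed form. The key observation is that $Y=(I-X^*X)^{-\frac12}X^*$ is exactly the operator $A_0$ attached to $A=X$ in the preceding remark, so $\hat Y=X$ and the spectral calculus gives $(I+YY^*)^{\frac12}=(I-X^*X)^{-\frac12}$ on $K$ (and $(I+Y^*Y)^{\frac12}=(I-XX^*)^{-\frac12}$ on $H$). Substituting $Y^*=X(I-X^*X)^{-\frac12}$ and these half-powers into the definitions of $L_T$ and $R_Y$, and factoring the common $(I-X^*X)^{-\frac12}$ to the right, I expect
\begin{align*}
L_T(Y) &= \big[(I+T^*T)^{\frac12}X - T^*\big](I-X^*X)^{-\frac12},\\
R_Y(T) &= \big[(I+TT^*)^{\frac12} - TX\big](I-X^*X)^{-\frac12}.
\end{align*}
The $(I-X^*X)^{\pm\frac12}$ then cancel in $L_T(Y)R_Y(T)^{-1}$, yielding
$$\psi_T(X)=\big[(I+T^*T)^{\frac12}X-T^*\big]\big[(I+TT^*)^{\frac12}-TX\big]^{-1},$$
the invertibility of the right-hand bracket (equivalently of $R_Y(T)$) being exactly the one already recorded, for finite-dimensional $K$, by the explicit formula for $R_S(T)^{-1}$.

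Next I would identify this closed form with $\eta$ for $A=-\hat T$. Using the intertwining relations, $I-\hat T\hat T^*=(I+T^*T)^{-1}$ and $I-\hat T^*\hat T=(I+TT^*)^{-1}$, so $(I-\hat T\hat T^*)^{-\frac12}=(I+T^*T)^{\frac12}$ and $(I-\hat T^*\hat T)^{\frac12}=(I+TT^*)^{-\frac12}$; moreover the two brackets factor as $(I+T^*T)^{\frac12}X-T^*=(I+T^*T)^{\frac12}(X-\hat T)$ and $(I+TT^*)^{\frac12}-TX=(I+TT^*)^{\frac12}(I-\hat T^*X)$. Feeding these back gives precisely
$$\psi_T(X)=(I-\hat T\hat T^*)^{-\frac12}(X-\hat T)(I-\hat T^*X)^{-1}(I-\hat T^*\hat T)^{\frac12},$$
which is (\ref{linear transformation}) with $A=-\hat T$. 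Setting $X=\hat T$ annihilates the numerator, so $\psi_T(\hat T)=0$; setting $X=0$ gives $\psi_T(0)=-T^*(I+TT^*)^{-\frac12}=-(I+T^*T)^{-\frac12}T^*=-\hat T$ by the same intertwining. For the inverse, comparing the formula for $\eta^{-1}$ in Lemma \ref{llt}(i) with (\ref{linear transformation}) shows $\eta_A^{-1}=\eta_{-A}$, hence $\psi_T^{-1}=\eta_{-\hat T}^{-1}=\eta_{\hat T}$; since $\widehat{-T}=-\hat T$, the first part applied to $-T$ gives $\psi_{-T}=\eta_{-\widehat{-T}}=\eta_{\hat T}$, so $\psi_T^{-1}=\psi_{-T}$.

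I expect the main obstacle to be purely bookkeeping in the first two steps: keeping straight which operators act on $H$ and which on $K$, and applying each intertwining relation on the correct side so that the half-power factors cancel cleanly. Once the closed form $\psi_T(X)=[(I+T^*T)^{\frac12}X-T^*][(I+TT^*)^{\frac12}-TX]^{-1}$ is in hand, the identification with $\eta$ and the three special-value computations are forced and essentially immediate.
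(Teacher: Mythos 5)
Your proposal is correct and follows essentially the same route as the paper: both reduce $\psi_T(X)=L_T(Y)R_Y(T)^{-1}$ to the M\"obius form (\ref{linear transformation}) with $A=-\hat T$, using $\hat Y=X$, $(I+YY^*)^{\frac12}=(I-X^*X)^{-\frac12}$, and the intertwining identities $I-\hat T\hat T^*=(I+T^*T)^{-1}$, $I-\hat T^*\hat T=(I+TT^*)^{-1}$, and then invoke Lemma \ref{llt} for the remaining assertions. Your organization (computing $L_T(Y)$ and $R_Y(T)$ separately and passing through the intermediate closed form $[(I+T^*T)^{\frac12}X-T^*][(I+TT^*)^{\frac12}-TX]^{-1}$) is just a cleaner bookkeeping of the paper's single chain of equalities.
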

\begin{proof}
It is easy to see that $\psi_T$ is of the form of \ref{linear transformation}, because:

\leftline{$\psi_T(X)=L_T(Y)R_Y(T)^{-1}$}
\leftline{$=\{(I+T^*T)^\frac{1}{2}Y^*-T^*(I+YY^*)^\frac{1}{2}\}\{(I+TT^*)^\frac{1}{2}(I+YY^*)^\frac{1}{2}-TY^*\}^{-1}$ }
%\leftline{$=\{~''~\}\{(I+TT^*)^\frac{1}{2}(I+YY^*)^\frac{1}{2}-T\hat{Y}(I+YY^*)^\frac{1}{2}\}^{-1}$ }
\leftline{$=\{(I+T^*T)^\frac{1}{2}Y^*-T^*(I+YY^*)^\frac{1}{2}\}(I+YY^*)^{-\frac{1}{2}}\{(I+TT^*)^\frac{1}{2}-T\hat{Y}\}^{-1}$ }
\leftline{$=\{(I+T^*T)^\frac{1}{2}\hat{Y}-T^*\}\{(I+TT^*)^\frac{1}{2}-T\hat{Y}\}^{-1}$ }
%\leftline{$=\{[(I+T^*T)^{-1}]^{-\frac{1}{2}}\hat{Y}-T^*\}\{(I+TT^*)^\frac{1}{2}-T\hat{Y}\}^{-1}$ }
\leftline{$=\{[(I+T^*T)^{-1}(I+T^*T-T^*T)]^{-\frac{1}{2}}\hat{Y}-T^*\}\{(I+TT^*)^\frac{1}{2}-T\hat{Y}\}^{-1}$ }
%\leftline{$=\{[I-(I+T^*T)^{-1}T^*T]^{-\frac{1}{2}}\hat{Y}-T^*\}\{(I+TT^*)^\frac{1}{2}-T\hat{Y}\}^{-1}$ }
\leftline{$=\{[I-\hat{T}\hat{T}^*]^{-\frac{1}{2}}\hat{Y}-T^*\}\{(I+TT^*)^\frac{1}{2}-T\hat{Y}\}^{-1}$ }
\leftline{$=\{(I-\hat{T}\hat{T}^*)^{-\frac{1}{2}}\hat{Y}-[I+T^*T-T^*T]^{\frac{1}{2}}T^*\}\{(I+TT^*)^\frac{1}{2}-T\hat{Y}\}^{-1}$ }
%\leftline{$=\{(I-\hat{T}\hat{T}^*)^{-\frac{1}{2}}\hat{Y}-[I+T^*T-(I+T^*T)(I+T^*T)^{-1}T^*T]^{\frac{1}{2}}T^*\}\{~''~\}^{-1}$ }
\leftline{$=\{(I-\hat{T}\hat{T}^*)^{-\frac{1}{2}}\hat{Y}-(I-\hat{T}\hat{T}^*)^{-\frac{1}{2}}\hat{T}\}\{(I+TT^*)^\frac{1}{2}-T\hat{Y}\}^{-1}$ }
\leftline{$=(I-\hat{T}\hat{T}^*)^{-\frac{1}{2}}(\hat{Y}-\hat{T})\{(I+TT^*)^\frac{1}{2}-T\hat{Y}\}^{-1}$ }
%\leftline{$=(I-\hat{T}\hat{T}^*)^{-\frac{1}{2}}(\hat{Y}-\hat{T})\{(I+TT^*)^\frac{1}{2}-(I+TT^*)^\frac{1}{2}(I+TT^*)^{-\frac{1}{2}}T\hat{Y}\}^{-1}$ }
\leftline{$=(I-\hat{T}\hat{T}^*)^{-\frac{1}{2}}(\hat{Y}-\hat{T})\{I-(I+TT^*)^{-\frac{1}{2}}T\hat{Y}\}^{-1}(I+TT^*)^{-\frac{1}{2}}$ }
\leftline{$=(I-\hat{T}\hat{T}^*)^{-\frac{1}{2}}(\hat{Y}-\hat{T})\{I-\hat{T}^*\hat{Y}\}^{-1}(I+TT^*)^{-\frac{1}{2}}$ }
%\leftline{$=(I-\hat{T}\hat{T}^*)^{-\frac{1}{2}}(\hat{Y}-\hat{T})(I-\hat{T}^*\hat{Y})^{-1}\{(I+TT^*)^{-1}\}^\frac{1}{2}$ }
\leftline{$=(I-\hat{T}\hat{T}^*)^{-\frac{1}{2}}(\hat{Y}-\hat{T})(I-\hat{T}^*\hat{Y})^{-1}\{(I+TT^*)^{-1}[I+TT^*-TT^*]\}^\frac{1}{2}$ }
%\leftline{$=(I-\hat{T}\hat{T}^*)^{-\frac{1}{2}}(\hat{Y}-\hat{T})(I-\hat{T}^*\hat{Y})^{-1}\{I-(I+TT^*)^{-1}TT^*\}^\frac{1}{2}$ }
\leftline{$=(I-\hat{T}\hat{T}^*)^{-\frac{1}{2}}(\hat{Y}-\hat{T})(I-\hat{T}^*\hat{Y})^{-1}\{I-\hat{T}^*\hat{T}\}^\frac{1}{2}$ }
\leftline{$=(1-\hat{T}\hat{T}^*)^{-\frac{1}{2}}(X-\hat{T})(I-\hat{T}^*X)^{-1}(I-\hat{T}^*\hat{T})^{\frac{1}{2}},$ }

\leftline{where $\hat{T}=(1+T^*T)^{-\frac{1}{2}}T^*\in \mathcal{B}$.}
\leftline{Therefore, by lemma \ref{llt}, $\psi^{-1}_T=\psi_{-T}$, $\psi_T(\hat{T})=0$ and $\psi_T(0)=-\hat{T}$.} 
\end{proof}

If $H$, $K$ are complex Hilbert spaces and $\dim K<\infty$,  set:
$$d(T,S)=\tanh ^{-1}\|L_T(S)R_{S}(T)^{-1}\|.$$
It is easy to see that, in this case, by theorems \ref{iso}, \ref{all metric} and  lemma \ref{mobius}, $d$ defines a metric on $C(H,K)$. This metric satisfies the next equalities: 
\begin{eqnarray}
d(T,S)&=&\tanh^{-1}\|L_T(S)R_S(T)^{-1}\|=\omega(0,\|\psi_T(\hat{S})\|)\nonumber\\
~&=&K_{\mathcal{B}}(0,\psi_T(\hat{S}))=K_{\mathcal{B}}(\psi_T(\hat{T}),\psi_T(\hat{S}))=K_{\mathcal{B}}(\hat{T},\hat{S}),\nonumber
\end{eqnarray}
where $\psi_T$ is as in lemma \ref{mobius}.

\section{The metric geometry properties} 
From here to the end of the article, $K$ is finite dimensionl Hilbert space, and by $C(H,K)$ we mean that the metric space $(C(H,K),d)$. A metric space $(M,\rho)$ is said to be metrically convex if given any two points $p,q\in M$ there exists at least one point $r\in M$ such that $r$ is metrically between $p$ and $q$ i.e. $d(p,q)=d(p,r)+d(r,q)$. Metric convexity is a fundamental concept in the axiomatic study of the geometry of metric spaces. However, in its most general form, it fail to satisfy one of the basic properties of convexity in the algebraic sense (in a linear space); namely, if $A$ and $B$ are two metrically convex subsets of a metric space, then it need not be the case that $A\cap B$ is metrically convex. To see this it is sufficient only to consider the ordinary unit circle $S$ in plane where the distance between two points is taken to be the length of the shortest arc joining them. The upper half circle and lower half circle of $S$ are each metrically convex but the intersection of these two sets is easily seen to be a set consisting of just two antipodal points. 

A subset $D$ of a metric space $M$ will be said to be admissible if $D$ can be written as the intersection of a family of closed balls  centered at points of $M$. The family $\mathcal{A}(M)$ of all admissible subsets of $M$, enters into the study of metric fixed point theory in a very natural way, is therefore the obvious candidate for the needed underlying convexity structure. The intersection of two admissible subset is admissible, and it is a very important property of these sets. For more details, you can see \cite{horv, kham}. In this section we will show that  $(C(H,K),d)$ is metrically convex and each admissible subset of this metric space is metrically convex, too.  Before this, we consider the following. 

\begin{lem}\label{phi}
For every $T,S\in C(H,K)$, there is a biholomorphic automorphism $\varphi$ on $\mathcal{B}$ with $\varphi(\hat{T})=-\varphi(\hat{S})$.  
\end{lem}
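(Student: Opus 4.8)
The plan is to reduce the problem to the case in which one of the two points is the origin, and then to symmetrise about a suitable geodesic midpoint. First I would use the M\"obius-type map $\psi_T$ of lemma \ref{mobius}, which is a biholomorphic automorphism of $\mathcal{B}$ with $\psi_T(\hat{T})=0$. Writing $W=\psi_T(\hat{S})\in\mathcal{B}$, it then suffices to produce a biholomorphic automorphism $\sigma$ of $\mathcal{B}$ with $\sigma(0)=-\sigma(W)$; the composition $\varphi=\sigma\circ\psi_T$ is then a biholomorphic automorphism (a composition of such maps) satisfying $\varphi(\hat{T})=\sigma(0)=-\sigma(W)=-\varphi(\hat{S})$, as required.

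To build $\sigma$, I would take $\sigma=\eta$ as in (\ref{linear transformation}) with parameter $A=-M$, where $M\in\mathcal{B}$ is the ``halfway operator'' between $0$ and $W$ along the geodesic joining them. Guided by the scalar model in $\Delta$, where the Poincar\'e midpoint of $0$ and $w$ is $\tanh(\tfrac12\tanh^{-1}|w|)\cdot w/|w|$, I would set
\begin{equation*}
M=W\bigl(I+(I-W^{*}W)^{\frac12}\bigr)^{-1},
\end{equation*}
equivalently $M=U\,g\bigl((W^{*}W)^{\frac12}\bigr)$ through the polar decomposition $W=U(W^{*}W)^{\frac12}$, where $g(s)=\tanh(\tfrac12\tanh^{-1}s)=s/(1+\sqrt{1-s^{2}})$. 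Since $g$ is non-decreasing on $[0,1)$ and $\|W\|<1$, the norm identity recorded earlier gives $\|M\|=g(\|W\|)<1$, so $M\in\mathcal{B}$ and $\eta$ with $A=-M$ is well defined; by lemma \ref{llt} it is a biholomorphic automorphism. With this choice $\sigma(0)=-M$ is immediate from the explicit form of $\eta$.

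The decisive step is to verify $\sigma(W)=M$, for then $\sigma(0)=-M=-\sigma(W)$ and we are done. Expanding,
\begin{equation*}
\sigma(W)=(I-MM^{*})^{-\frac12}(W-M)(I-M^{*}W)^{-1}(I-M^{*}M)^{\frac12},
\end{equation*}
and since $M=WR$ with $R=(I+(I-W^{*}W)^{\frac12})^{-1}$ a positive function of $W^{*}W$, every factor above is a function of $W^{*}W$ (on $K$) or of $WW^{*}$ (on $H$), the two being interchanged through the intertwining relation $W\,f(W^{*}W)=f(WW^{*})\,W$. This is exactly where the main obstacle lies: because the operators do not commute outright, one cannot simply transcribe the one-dimensional computation, and the argument must be organised so that the $(I-\,\cdot\,)^{\pm\frac12}$ factors are moved to the correct side before cancelling. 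Once this bookkeeping is done, a singular-value (spectral) reduction collapses the identity to the scalar statement $(w-m)/(1-mw)=m$ with $m=g(w)$, which is precisely the half-angle identity $\tanh(\theta-\tfrac12\theta)=\tanh(\tfrac12\theta)$ for $w=\tanh\theta$. This yields $\sigma(W)=M$ and completes the construction.

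I would also note, as motivation only, that $\sigma\circ\psi_T$ sends $\hat{T}$ and $\hat{S}$ to a pair of points whose geodesic midpoint is the origin, and that the geodesic symmetry of $\mathcal{B}$ at the origin is $Z\mapsto -Z$; but since metric convexity and the existence of midpoints are established only later in the paper, I would keep the proof purely computational as above so as to avoid any circularity.
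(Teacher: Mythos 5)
Your proposal is correct and follows essentially the same route as the paper: send $\hat{T}$ to $0$ by a M\"obius automorphism, form the hyperbolic midpoint of $0$ and the image of $\hat{S}$ via the polar decomposition and the operator half-angle formula $\tanh(\tfrac12\tanh^{-1}(\cdot))$ (your $M=W(I+(I-W^*W)^{1/2})^{-1}$ is exactly the paper's $A=VC$), and then apply the M\"obius automorphism centred at that midpoint. The only cosmetic difference is that the paper verifies the doubling identity $\psi_{-A_0}(A)=B$ using $\tanh(2x)=2\tanh x/(1+\tanh^2x)$, whereas you verify the halving identity $\sigma(W)=M$ directly; your ``bookkeeping'' step does go through, since after intertwining $f(WW^*)W=Wf(W^*W)$ all remaining factors are functions of $W^*W$ and the identity reduces to $I-2R+R^2W^*W=0$.
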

\begin{proof}  Let $B=\psi_{-T}(-\hat{S})$, and $B=V|B|$ be the polar decomposition of $B$ (so $|B|=(B^*B)^{1/2}$). Put $C=\tanh (\frac{1}{2}\tanh ^{-1}(|B|))$, and $A=VC$.
By the below calculation, for $A_0=(1-A^*A)^{-\frac{1}{2}}A^*$, we have  $B=\psi_{-A_0}(A)$.
\begin{eqnarray}
\psi_{-A_0}(A)&=& L_{-A_0}(A_0)R_{A_0}(-A_0)^{-1}\nonumber\\
~&=&2A_0^*[I+A_0(I+A_0^*A_0)^{-1}A_0^*]^{-1}(I+A_0A_0^*)^{-\frac{1}{2}} \nonumber \\
~&=&2A(I-A^*A)^{-\frac{1}{2}}[I+A_0(I-AA^*)A^*_0]^{-1}(I-A^*A)^{\frac{1}{2}}\nonumber\\
~&=&2A(I-A^*A)^{-\frac{1}{2}}(I+A^*A)^{-1}(I-A^*A)^{\frac{1}{2}}\nonumber\\
~&=&2VC (I-C^2)^{-\frac{1}{2}}(I+C^2)^{-1}(I-C^2)^\frac{1}{2} \nonumber\\
~&=&2VC(I+C^2)^{-1}=V|B|.\nonumber
\end{eqnarray}
 Note that $I+A^*_0A_0=(I-AA^*)^{-1}$, and in the last equlity, we used the identity $\tanh(2x)=2\tanh x/(1+\tanh^2x)$. 

Now, we can define two biholomorphic automorphism $\psi_1$ and $\psi_2$ on $\mathcal{B}$, setting $\psi_1(X)=\psi_{A_0}(X)$ and $\psi_2(X)=\psi_{-T}(-X)$.
Indeed, set $\varphi=\psi_1\circ\psi_2$. Then we have
\begin{eqnarray}
\varphi(\hat{T})&=&\psi_1(\psi_2(\hat{T}))=\psi_{A_0}\psi_{-T}(-\hat{T})\nonumber\\
~&=&\psi_{A_0}(0)=-A=-\psi_{A_0}\psi_{-A_0}(A)=-\psi_{A_0}(B)=-\varphi(\hat{S}).\nonumber
\end{eqnarray}
It is clear that by lemma \ref{mobius}, $\varphi$ is a biholomorphic automorphism on $\mathcal{B}$.
\end{proof}
\begin{lem}\label{wot}
Under the conditions of lemmas \ref{phi} and \ref{mobius},
\begin{enumerate}
\item  The automorphisms $\varphi$ and $\psi_T$ are WOT-continuous.
\item The subset $\hat{D}$ of $\mathcal{B}$ is WOT-compact, for each admissible set $D$.
\end{enumerate}
\end{lem}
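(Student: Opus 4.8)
The plan is to reduce both statements to the single fact, proved in Lemma \ref{llt}(iii), that any transformation of the form (\ref{linear transformation}) is WOT-continuous once $\dim K<\infty$, combined with the explicit computation of Lemma \ref{mobius} exhibiting $\psi_T$ as such a transformation (with $A=-\hat{T}$). For part (1) I would argue as follows. Since $\psi_T$ is, by Lemma \ref{mobius}, exactly a map of type (\ref{linear transformation}), its WOT-continuity is immediate from Lemma \ref{llt}(iii). For $\varphi=\psi_1\circ\psi_2$ (notation of Lemma \ref{phi}), I note that $\psi_1=\psi_{A_0}$ is again of the form (\ref{linear transformation}), hence WOT-continuous, while $\psi_2(X)=\psi_{-T}(-X)$ is the composition of the WOT-continuous map $\psi_{-T}$ with the linear map $X\mapsto -X$, which preserves $\mathcal{B}$ and is trivially WOT-continuous. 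As every factor maps $\mathcal{B}$ into $\mathcal{B}$ and is WOT-continuous, the composition $\varphi$ is WOT-continuous.

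For part (2) the key step is to identify the hat-image of one closed ball. Writing $\overline{B}(S,r)=\{T:d(T,S)\le r\}$ and using $d(T,S)=K_{\mathcal{B}}(\hat{T},\hat{S})$ together with the invariance (Theorem \ref{iso}) and normalization (Theorem \ref{all metric}) of the Kobayashi metric and the relation $\psi_S(\hat{S})=0$ from Lemma \ref{mobius}, one gets $d(T,S)=\tanh^{-1}\|\psi_S(\hat{T})\|$, whence
\[
\widehat{\overline{B}(S,r)}=\{W\in\mathcal{B}:\|\psi_S(W)\|\le \tanh r\}=\psi_{-S}\big(\{Z\in\mathcal{B}:\|Z\|\le \tanh r\}\big),
\]
using $\psi_S^{-1}=\psi_{-S}$. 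It therefore suffices to show that the norm-closed ball $\{Z:\|Z\|\le \tanh r\}$ (of radius $<1$, hence contained in $\mathcal{B}$) is WOT-compact, for then its image under the WOT-continuous map $\psi_{-S}$ from part (1) is WOT-compact as well.

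To see the norm ball is WOT-compact I would use $\dim K<\infty$: fixing an orthonormal basis of $K$ identifies $L(K,H)$ with $H^{\dim K}$ via $Z\mapsto(Ze_i)_i$, under which the WOT becomes the product of the weak topologies of $H$. The ball embeds into a finite product of weakly compact balls of $H$ (each factor is weakly compact by reflexivity of $H$), which is WOT-compact by Tychonoff; and the ball itself is WOT-closed, since the operator norm is WOT-lower-semicontinuous (being a supremum of the WOT-continuous functions $Z\mapsto|\langle Zk,h\rangle|$). A WOT-closed subset of a WOT-compact set is WOT-compact, so each $\widehat{\overline{B}(S,r)}$ is WOT-compact. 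Finally, for an admissible set $D=\bigcap_\alpha \overline{B}(S_\alpha,r_\alpha)$, I use that $T\mapsto\hat{T}$ is a bijection of $C(H,K)$ onto $\mathcal{B}$ (the Remark preceding Lemma \ref{mobius}), so it commutes with intersections; thus $\hat{D}=\bigcap_\alpha\widehat{\overline{B}(S_\alpha,r_\alpha)}$ is an intersection of WOT-compact, hence WOT-closed (as the WOT is Hausdorff) sets, so it is a closed subset of any one of them and is therefore WOT-compact.

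I expect the main obstacle to be the WOT-compactness of the operator-norm ball: the delicate point is that one must combine two ingredients of opposite flavor — \emph{closedness}, coming from lower semicontinuity of the norm, and \emph{ambient compactness}, coming from Banach--Alaoglu applied factorwise in $H^{\dim K}$. Everything else, including both continuity assertions and the passage from a single ball to an arbitrary admissible set, is a formal consequence of the identities already established in Lemmas \ref{llt}, \ref{mobius} and \ref{phi}.
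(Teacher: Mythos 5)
Your proof is correct and follows essentially the same route as the paper: part (1) via Lemma \ref{llt}(iii), and part (2) by reducing an admissible set to closed balls, identifying the hat-image of the ball about $0$ with a norm ball of radius $\tanh r$, and transporting the general case by the WOT-continuous automorphism $\psi_{-S}$. You merely supply details the paper leaves implicit (the Banach--Alaoglu/Tychonoff argument for WOT-compactness of the norm ball, and the passage from balls to intersections), and your treatment of $\varphi$ as a composition of WOT-continuous maps is in fact slightly cleaner than the paper's assertion that $\varphi$ itself is of the form (\ref{linear transformation}).
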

\begin{proof}
(1) The proof of lemma \ref{phi} and \ref{mobius} show that $\varphi$ and $\psi_T$ are of the form of \ref{linear transformation}. Hence by lemma\ref{llt}(iii), they are WOT-continuous. 
(2) To prove the second part, it is enough to show that this statement is true for all closed balls instead of the admissible subsets. 
We shall show first that if $D=\{T:~d(T,0)\leq r\}$ is a closed ball with center $0$, then $\hat{D}$ is WOT-compact. By theorem \ref{all metric}, for each $\hat{T}\in \hat{D}$ we have 
$$\tanh^{-1}\|\hat{T}\|=\omega(0,\|\hat{T}\|)=K_\mathcal{B}(\hat{T},0)=d(T,0)\leq r.$$
It follows that  $\|\hat{T}\|\leq (e^{2r}-1)/(e^{2r}+1)$. On the other hand, if $A\in L(K,H)$ and $\|A\|\leq (e^{2r}-1)/(e^{2r}+1)$ then $A=\hat{T}$ where $T=(1-A^*A)^{-\frac{1}{2}}A^*$, and $d(T,0)\leq r$. This inequality follows from the identity
$$d(T,0)=K_\mathcal{B}(\hat{T},\hat{0})=K_\mathcal{B}(A,0)=\omega(0,\|A\|)=\tanh^{-1}\|A\|.$$
We used the identity $\tanh^{-1}(e^{2r}-1)/(e^{2r}+1)=r$. So
$$\hat{D}=\{A\in L(K,H):~\|A\|\leq(e^{2r}-1)/(e^{2r}+1)\}.$$
The sentence follows from the fact that closed balls with center 0, are WOT-compact.

Now, let $D=\{T:~ d(T,T_1)\leq r\}$ where $T_1\in C(H,K)$. By lemma \ref{mobius}, there exists a biholomorphic automorphism $\psi$ on $\mathcal{B}$ with $\psi(\hat{T_1})=0$. Therefore
\begin{eqnarray}
\hat{D}&=&\{\hat{T}:~d(T,T_1)\leq r\}=\{\hat{T}:~K_\mathcal{B}(\hat{T},\hat{T_1})\leq r\}\nonumber\\
~&=&\{\hat{T}: ~K_\mathcal{B}(\psi(\hat{T}),\psi(\hat{T_1}))\leq r\}=\{\hat{T}:~K_\mathcal{B}(\psi(\hat{T}),0)\leq r\}\nonumber\\
~&=&\{\psi ^{-1}(\hat{S}):~K_\mathcal{B}(\hat{S},0)\leq r\}=\psi ^{-1}(\hat{D_1}),\nonumber
\end{eqnarray}
where $D_1=\{S:~d(S,0)\leq r\}$. The statement follows from the fact that $\psi ^{-1}$ is of the form of \ref{linear transformation} and by lemma \ref{llt}(iii), it is WOT-continuous. 
\end{proof}
\begin{lem} \label{mid}
For  $F= \{ T_1, \cdots , T_n \}$ of $n=2^k$ elements in $C(H,K)$, there is a $Q \in C(H,K)$ satisfying $d(Q,X) \leq \frac{1}{n} \sum_i d(T_i,X)$, for all $X\in C(H,K)$.
\end{lem}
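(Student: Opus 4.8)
The plan is to reduce everything to the two–point case $n=2$ (the existence of a metric midpoint) and then to build the general point $Q$ by a dyadic pairing induction, which is precisely why the hypothesis $n=2^{k}$ appears. The geometry of Lemma \ref{phi} is exactly the tool that makes the midpoint case tractable: it puts $\hat{T_1},\hat{T_2}$ into an antipodal (symmetric) position.

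\emph{Base case ($n=2$).} Given $T_1,T_2$, I would first invoke Lemma \ref{phi} to get a biholomorphic automorphism $\varphi$ of $\mathcal{B}$ with $\varphi(\hat{T_1})=-\varphi(\hat{T_2})$; write $a=\varphi(\hat{T_1})$. Since $\varphi^{-1}(0)\in\mathcal{B}$, the Remark produces an operator $Q\in C(H,K)$ with $\hat{Q}=\varphi^{-1}(0)$, and this $Q$ is the candidate midpoint. Using the isometry property of Theorem \ref{iso} together with the identity $d(T,S)=K_{\mathcal{B}}(\hat{T},\hat{S})$, for an arbitrary $X$ set $w=\varphi(\hat{X})$; then $d(Q,X)=K_{\mathcal{B}}(0,w)$, $d(T_1,X)=K_{\mathcal{B}}(a,w)$ and $d(T_2,X)=K_{\mathcal{B}}(-a,w)$. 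Thus the whole statement for $n=2$ collapses to the single inequality $K_{\mathcal{B}}(0,w)\leq\tfrac12\left[K_{\mathcal{B}}(a,w)+K_{\mathcal{B}}(-a,w)\right]$.

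\emph{The core inequality.} To prove this I would pass to one complex dimension via supporting functionals. By Theorem \ref{all metric} and Hahn--Banach, since $\|w\|=\sup_{\|\ell\|\leq1}|\ell(w)|$ and $\tanh^{-1}$ is increasing, $K_{\mathcal{B}}(0,w)=\omega(0,\|w\|)=\sup_{\|\ell\|\leq1}\omega(0,\ell(w))$, the supremum over continuous linear functionals of norm at most $1$. Each such $\ell$ maps $\mathcal{B}$ holomorphically into $\Delta$, so by the distance–decreasing property of the Kobayashi metric under holomorphic maps, $\omega(\ell(p),\ell(q))\leq K_{\mathcal{B}}(p,q)$. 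Fixing $\ell$ and using linearity, $\ell(0)=0$ is the Euclidean midpoint of $\ell(a)$ and $\ell(-a)=-\ell(a)$; because $z\mapsto-z$ is an $\omega$–isometry of $\Delta$ fixing $0$, this $0$ is in fact the hyperbolic geodesic midpoint of the antipodal pair $\pm\ell(a)$. As the Poincar\'e disc has nonpositive curvature, the distance to the fixed point $\ell(w)$ is convex along geodesics, so $\omega(0,\ell(w))\leq\tfrac12\left[\omega(\ell(a),\ell(w))+\omega(-\ell(a),\ell(w))\right]\leq\tfrac12\left[K_{\mathcal{B}}(a,w)+K_{\mathcal{B}}(-a,w)\right]$. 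Taking the supremum over $\ell$ on the left yields the core inequality, and translating back through $\varphi$ via Theorem \ref{iso} gives $d(Q,X)\leq\tfrac12\left[d(T_1,X)+d(T_2,X)\right]$ for all $X$.

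\emph{Dyadic induction and the main obstacle.} For $n=2^{k}$ I would split $F$ into $2^{k-1}$ consecutive pairs, replace each pair by its midpoint from the base case to obtain $Q_1,\dots,Q_{n/2}$, and iterate $k$ times; the averaged estimate is preserved at each stage, since $d(Q,X)\leq\tfrac{1}{n/2}\sum_{j}d(Q_j,X)\leq\tfrac{1}{n/2}\sum_{j}\tfrac12\left[d(T_{2j-1},X)+d(T_{2j},X)\right]=\tfrac1n\sum_{i}d(T_i,X)$. The one genuinely delicate point is the core inequality: the Kobayashi distance is midpoint–convex only for the antipodal configuration, not along arbitrary segments, so the symmetrization supplied by Lemma \ref{phi} is indispensable, and the reduction to one complex dimension through supporting functionals is exactly what lets the nonpositive curvature of $(\Delta,\omega)$ be applied. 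Confirming that both the functional representation of $K_{\mathcal{B}}(0,\cdot)$ and the distance–decreasing property remain valid in this infinite–dimensional operator setting is the step that requires the most care.
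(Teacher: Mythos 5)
Your proof is correct, and its skeleton (symmetrize via Lemma \ref{phi}, take $Q$ with $\hat{Q}=\varphi^{-1}(0)$, then do the dyadic pairing induction) is exactly the paper's; the only genuine divergence is how the core inequality $2K_{\mathcal{B}}(0,w)\leq K_{\mathcal{B}}(a,w)+K_{\mathcal{B}}(-a,w)$ is established. The paper first proves the operator identity $d(X,-X)=2d(X,0)$ by an explicit computation with the polar decomposition of $\hat{X}$ and the double-angle formula $\tanh(2x)=2\tanh x/(1+\tanh^{2}x)$, and then simply writes $2d(Q,X)=K_{\mathcal{B}}(w,-w)\leq K_{\mathcal{B}}(w,a)+K_{\mathcal{B}}(a,-w)=K_{\mathcal{B}}(w,a)+K_{\mathcal{B}}(-a,w)$, using the triangle inequality and the fact that $Z\mapsto -Z$ is a $K_{\mathcal{B}}$-isometry. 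You instead reduce to the disc through supporting functionals and invoke convexity of the hyperbolic distance along geodesics; both of your imported ingredients are sound in this setting (the representation $K_{\mathcal{B}}(0,w)=\sup_{\|\ell\|\leq1}\omega(0,\ell(w))$ is just Theorem \ref{all metric} plus Hahn--Banach, and the distance-decreasing property of holomorphic maps into $\Delta$ follows directly from the chain definition of $K_{\mathcal{B}}$ and the Schwarz--Pick lemma), so the step you flag as delicate is in fact safe. The trade-off: your route avoids the paper's somewhat laborious operator computation at the price of citing the CAT(0) convexity of $(\Delta,\omega)$; note also that even that citation can be dispensed with, since for the antipodal configuration one has the elementary chain $2\omega(0,z)=\omega(z,-z)\leq\omega(z,b)+\omega(b,-z)=\omega(z,b)+\omega(-b,z)$ in $\Delta$ --- which is precisely the paper's argument transplanted to one dimension, showing the two proofs are closer than they first appear.
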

\begin{proof}
First we note that for each $X\in C(H,K)$, $d(X,-X)=2d(X,0)$. Because, if $\hat{X}=V|\hat{X}|$ is the polar decomposition of $\hat{X}$, then
\begin{eqnarray}
d(X,-X)&=&\tanh ^{-1}\|L_X(-X)R_{-X}(X)^{-1} \|\nonumber\\
~&=&\tanh ^{-1}\|-2X^*[1+X(1+X^*X)^{-1}X^* ]^{-1}(1+XX^*)^{-\frac{1}{2}} \|\nonumber\\
~&=&\tanh ^{-1}\|2[1+X^*(1+XX^*)^{-1}X]^{-1}X^*(1+XX^*)^{-\frac{1}{2}} \|\nonumber\\
%~&=&\tanh ^{-1}\|2[1+\hat{X}\hat{X}^*]^{-1}\hat{X} \|\nonumber\\
~&=&\tanh ^{-1}\|2\hat{X}[1+\hat{X}^*\hat{X}]^{-1} \|\nonumber\\
~&=&\tanh ^{-1}\| 2V |\hat{X}|(I+|\hat{X}|^2)^{-1}\|=\tanh ^{-1}\| 2 |\hat{X}|(I+|\hat{X}|^2)^{-1}\| \nonumber\\
~&=&\|\tanh ^{-1}( 2 |\hat{X}|(I+|\hat{X}|^2)^{-1})\|=\|2\tanh ^{-1}(|\hat{X}|)\| \nonumber\\
~&=&2\tanh ^{-1}\|\hat{X}\| =2d(X,0). \nonumber
\end{eqnarray} 
Notice that $V$ is a partial isometry, $(\ker V)^\perp=\overline{|\hat{X}|(K) }$, $| \hat{X}|$ is a positive operator. The function $f(x)=\tanh ^{-1}(x)$ is a non-decreasing on the interval $[0,\| \hat{X} \| ]$, which implies that $f(\|A\|)=\|f(A)\|$ for each $A\in L(K,H)$.

 Now, we will prove the  theorem by induction on $k$.\\
\underline{$1^{st}$ case}:  
let $T_1,T_2\in C(H,K)$. By Lemma \ref{phi}, there exists an isometric transformation $\varphi$, such that $\varphi(\hat{T_2})=-\varphi(\hat{T_1})$. Put $Q=(I-q^*q)^{-\frac{1}{2}}q^*$, where $q=\varphi^{-1}(0)$. So
\begin{eqnarray}
2d(Q,X)=2K_\mathcal{B}(q,\hat{X})&=&2K_\mathcal{B}(0,\varphi(\hat{X}))\nonumber\\
~&=&2d(0,Y)=d(Y,-Y)=K_\mathcal{B}(\varphi(\hat{X}),-\varphi(\hat{X})),\nonumber
\end{eqnarray}
where $Y=(I-\varphi(\hat{X})^*\varphi(\hat{X}))^{-\frac{1}{2}}\varphi(\hat{X})^*$.
Since, by theorem \ref{iso}, $A\mapsto -A$ is an isometric map with respect to the Kobayashi distance, we have
\begin{eqnarray}
2d(Q,X)&=&K_\mathcal{B}(\varphi(\hat{X}),-\varphi(\hat{X}))\nonumber\\
~&\leq & K_\mathcal{B}(\varphi(\hat{X}),\varphi(\hat{T_2}))+k_\mathcal{B}(\varphi(\hat{T_2}),-\varphi(\hat{X})) \nonumber\\
~&=& K_\mathcal{B}(\varphi(\hat{X}),\varphi(\hat{T_2}))+K_\mathcal{B}(-\varphi(\hat{T_2}),\varphi(\hat{X}))  \nonumber\\
~&=&  K_\mathcal{B}(\varphi(\hat{X}),\varphi(\hat{T_2}))+K_\mathcal{B}(\varphi(\hat{T_1}),\varphi(\hat{X}))  \nonumber\\
~&=& K_\mathcal{B}(\hat{X},\hat{T_2})+K_\mathcal{B}(\hat{T_1},\hat{X})  \nonumber\\
~&=& d(X,T_2)+d(T_1,X),\nonumber
\end{eqnarray}
for each $X\in C(H,K)$. \\
\underline{$2^{st}$ case}: Let  $F= \{ T_1, \cdots , T_n \}$ of $n=2^k$ elements in $C(H,K)$. For each $j=1,\dots, 2^{k-1}$, there exist $Q_j\in C(H,K)$ satisfying
$$d(Q_j,X)\leq \frac{1}{2}(d(T_{2j-1},X)+d(T_{2j},X)),$$
for all $X\in C(H,K)$. So, by hypothesis of induction, there  exist $Q\in C(H,K)$ satisfying
$$d(Q,X)\leq \frac{1}{2^{k-1}}\sum_j d(Q_j,X).$$
Indeed, we have
\begin{eqnarray}
d(Q,X)&\leq &\frac{1}{2^{k-1}}\sum_j \frac{1}{2}(d(T_{2j-1},X)+d(T_{2j},X))\nonumber\\
~&=&\frac{1}{2^k}\sum_i d(T_i,X).\nonumber
\end{eqnarray}  
\end{proof}
%\begin{cor}
%Each nonempty admissible subset of $(C(H,K),d)$ is either a singleton or it is infinite.
%\end{cor}
Now, we are ready to prove that the set of all admissible subsets of $C(H,K)$ is compact. Recall that $\mathcal{A}(M)$ is said to be compact if every descending chain of nonempty members of $\mathcal{A}(M)$ has nonempty intersection. 
\begin{cor}\label{comp}
$\mathcal{A}(C(H,K))$ is compact.
\end{cor}
\begin{proof} It is obvious, by the second part of lemma \ref{wot}.
\end{proof}
The next corollary follows from the proposition 5.1 of \cite{kham}. It says that each metric space $M$ for which $\mathcal{A}(M)$ is compact, is complete.  
\begin{cor}
The metric space $(C(H,K),d)$ is complete.
\end{cor}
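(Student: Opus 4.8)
The plan is to obtain this as an immediate consequence of the compactness of the admissible structure established just above. The essential work has already been done in Corollary \ref{comp} (which in turn rests on the WOT-compactness recorded in Lemma \ref{wot}(2)); what remains is simply to invoke the general principle, stated as Proposition 5.1 of \cite{kham}, that any metric space whose family of admissible sets is compact is complete. So at the level of citation the proof is one line: $\mathcal{A}(C(H,K))$ is compact by Corollary \ref{comp}, hence $(C(H,K),d)$ is complete.

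For transparency I would also recall the short metric argument underlying that principle, since it is the only place any checking is needed. Let $(T_n)$ be a Cauchy sequence in $C(H,K)$ and set $r_n=\sup_{m\ge n} d(T_n,T_m)$; since the sequence is Cauchy, each $r_n$ is finite and $r_n\to 0$. For each $n$ let $B_n=\{X:\ d(X,T_n)\le r_n\}$ be the corresponding closed ball and put $A_n=\bigcap_{k=1}^{n} B_k$. Each $A_n$ is a finite intersection of closed balls, hence admissible, and the sequence $(A_n)$ is descending.

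The one point that genuinely needs verification is that every $A_n$ is nonempty, and this holds because the tail $\{T_m:\ m\ge n\}$ is contained in $A_n$: for $m\ge n\ge k$ we have $d(T_k,T_m)\le r_k$, so $T_m\in B_k$ and therefore $T_m\in A_n$. Now Corollary \ref{comp} supplies a point $T\in\bigcap_n A_n$. For this $T$ we have $d(T,T_n)\le r_n\to 0$, so $T_n\to T$ in $(C(H,K),d)$, which is thus complete.

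I do not anticipate any real obstacle in this step. All of the analytic difficulty—namely the WOT-compactness of the sets $\hat D$, and with it the compactness of $\mathcal{A}(C(H,K))$—is already packaged into Lemma \ref{wot} and Corollary \ref{comp}, while the passage from compactness of the admissible structure to completeness is the purely order-theoretic/metric argument sketched above. The only care required is the bookkeeping showing the finite intersections of tail balls are nonempty and form a descending chain, which is routine.
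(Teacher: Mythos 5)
Your proposal is correct and follows the paper exactly: the paper likewise deduces completeness immediately from Corollary \ref{comp} together with Proposition 5.1 of \cite{kham}, offering no further argument. Your additional unpacking of that proposition via the descending chain of tail balls is a sound and welcome elaboration, but it is not a different route.
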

\begin{cor}
The metric space $(C(H,K),d)$ is metrically convex.
\end{cor}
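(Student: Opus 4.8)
The goal is to show that $(C(H,K),d)$ is metrically convex, i.e.\ that for any two points $T,S\in C(H,K)$ there is a point $R$ lying metrically between them, $d(T,S)=d(T,R)+d(R,S)$. The natural strategy is to transport the problem to the open unit ball $\mathcal{B}$ of $L(K,H)$, where the metric $d$ coincides with the Kobayashi metric $K_{\mathcal{B}}$ under the correspondence $T\mapsto\hat{T}$, and to exploit the isometries supplied by Lemmas \ref{mobius} and \ref{phi}.

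\medskip

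First I would reduce to a normalized configuration. Given $T,S\in C(H,K)$, apply Lemma \ref{mobius} to the operator $T$: the map $\psi_T$ is a WOT-continuous biholomorphic automorphism of $\mathcal{B}$ which is a $K_{\mathcal{B}}$-isometry by Theorem \ref{iso}, and it sends $\hat{T}$ to $0$. Writing $A=\psi_T(\hat{S})$, the identities displayed just after Lemma \ref{mobius} give
\begin{equation}\label{prop-reduce}
d(T,S)=K_{\mathcal{B}}(\hat{T},\hat{S})=K_{\mathcal{B}}(0,A)=\omega(0,\|A\|)=\tanh^{-1}\|A\|,
\end{equation}
using Theorem \ref{all metric} for the third equality. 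Thus it suffices to produce a point of $\mathcal{B}$ on a geodesic from $0$ to $A$ and pull it back.

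\medskip

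Second, I would construct the midpoint-type interior point directly on the segment from $0$ to $A$. Let $A=V|A|$ be the polar decomposition and choose a real parameter $t\in(0,1)$; set $P_t=V\,\tanh\!\bigl(t\tanh^{-1}|A|\bigr)$, the natural point at parameter $t$ along the ``radial'' geodesic, exactly the device already used in the proof of Lemma \ref{phi} with $t=\tfrac12$. Because $\tanh^{-1}$ is operator-monotone on $[0,\|A\|]$ and $V$ is a partial isometry with $(\ker V)^{\perp}=\overline{|A|(K)}$, the same functional-calculus computation as in Lemma \ref{mid} (where $d(X,-X)=2d(X,0)$ was obtained) yields $\|P_t\|=\tanh\!\bigl(t\tanh^{-1}\|A\|\bigr)$ and, for the complementary point, a matching norm so that
\begin{equation}\label{prop-additive}
\tanh^{-1}\|P_t\|+K_{\mathcal{B}}(P_t,A)=\tanh^{-1}\|A\|.
\end{equation}
The point $R$ is then defined by $\hat{R}=\psi_T^{-1}(P_t)$, i.e.\ $R=(I-\hat{R}^*\hat{R})^{-\frac{1}{2}}\hat{R}^{\,*}\in C(H,K)$ via the construction in the Remark. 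Applying the isometry $\psi_T^{-1}$ to \eqref{prop-additive} and using \eqref{prop-reduce} converts it into $d(T,R)+d(R,S)=d(T,S)$, which is metric convexity.

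\medskip

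The main obstacle is the additivity identity \eqref{prop-additive}, specifically controlling the term $K_{\mathcal{B}}(P_t,A)$: a generic upper bound $K_{\mathcal{B}}(P_t,A)\le\tanh^{-1}\|A\|-\tanh^{-1}\|P_t\|$ is not automatic, because $K_{\mathcal{B}}$ is only a pseudo-metric defined through an infimum over analytic chains, and I must verify that $P_t$ genuinely lies on a geodesic rather than merely having the right norm. The clean way around this is to compose a second Möbius-type automorphism $\psi$ with $\psi(P_t)=0$ (again furnished by Lemma \ref{mobius} applied to the operator associated with $P_t$), reducing $K_{\mathcal{B}}(P_t,A)$ to $\omega(0,\|\psi(A)\|)$ via Theorems \ref{iso} and \ref{all metric}, and then checking the scalar additivity of the Poincaré metric $\omega$ along the explicit radial family --- a one-variable computation with $\tanh^{-1}$. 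Once the scalar identity $\omega(0,s)+\omega(s,u)=\omega(0,u)$ for $0\le s\le u<1$ is invoked and lifted through the functional calculus exactly as in Lemma \ref{mid}, \eqref{prop-additive} follows and the proof closes.
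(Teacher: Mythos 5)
Your proposal is correct in outline, but it takes a genuinely different route from the paper. The paper's own proof is two lines: it invokes Lemma \ref{mid} with $n=2$ to produce a $Q$ with $d(Q,X)\leq\frac{1}{2}[d(T_1,X)+d(T_2,X)]$ for all $X$, specializes to $X=T_1$ and $X=T_2$ to get $d(Q,T_i)\leq\frac{1}{2}d(T_1,T_2)$, and lets the triangle inequality force $d(T_1,T_2)=d(T_1,Q)+d(Q,T_2)$. All the geometric work is thus already packaged in Lemma \ref{mid}. You instead bypass Lemma \ref{mid} entirely and build an explicit metric segment: normalize by $\psi_T$ so that $\hat T\mapsto 0$ and $\hat S\mapsto A=V|A|$, take $P_t=V\tanh(t\tanh^{-1}|A|)$, and verify additivity of the distances along this radial family. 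This is more work but proves more --- you exhibit a point at every parameter $t\in(0,1)$, so the space is geodesic, not merely midpoint-convex. Your identification of the one genuine obstacle (that $K_{\mathcal{B}}(P_t,A)$ must be computed, not merely bounded above by a norm difference) is exactly right, and your proposed resolution works: applying the M\"obius automorphism centered at $P_t$ to $A$ collapses, because $A$ and $P_t$ share the partial isometry $V$ and their moduli are commuting functions of $|A|$, to $Vh(|A|)$ with $h(s)=\frac{s-g(s)}{1-sg(s)}=\tanh((1-t)\tanh^{-1}s)$ where $g(s)=\tanh(t\tanh^{-1}s)$; since $h$ is non-decreasing, $\|Vh(|A|)\|=h(\|A\|)$ and the scalar additivity $t\tanh^{-1}\|A\|+(1-t)\tanh^{-1}\|A\|=\tanh^{-1}\|A\|$ gives \eqref{prop-additive}. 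To make this airtight you should write out that one functional-calculus step (the intertwining $(I-P_tP_t^*)^{-1/2}V=V(I-P_t^*P_t)^{-1/2}$ and the resulting formula $\psi(A)=Vh(|A|)$), since it is the only place where the argument could silently fail; it is the same kind of computation as in Lemmas \ref{phi} and \ref{mid}, so nothing new is needed.
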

\begin{proof} 
Let $T_1, T_2\in C(H,K)$. By lemma \ref{mid}, there exists a $Q\in C(H,K)$  satisfying $d(Q,X)\leq \frac{1}{2}[d(T_1,X)+d(T_2,X)]$, for each $X\in C(H,K)$. This shows that $d(T_1,T_2)=d(T_1,Q)+d(Q,T_2)$. Hence the proof is completed.    
\end{proof}
\section{The normal structure and its applications}
We recall some definitions yielding to the notion of normal structure. Let $\mathcal{A}(M)$ denote the family of all admissible set in a metric space $(M,\rho)$. (Thus $D=\cap\{B:$ $B$ is a closed ball which contains $D\} $ if and only if $D\in\mathcal{A}(M)$.) We now follow the definition in the metric space \cite{kham}:
\begin{eqnarray}
r_a(D)&=&\sup \{\rho(a,x):~ x\in D\}, ~~~a\in M \nonumber\\
r(D)&=&inf\{r_x(D):~ x\in D\}.\nonumber
\end{eqnarray}
\begin{defn}
A point $a$ in an admissible subset $D$ of a metric space $(M,\rho)$ is called diametral if $r_a(D)=$diam$(D)$.
\end{defn}
\begin{lem} \label{seq0}
If all points of an admissible subset $D$ of metric space $C(H,K)$, are diametral and $\gamma$ be the diameter of $D$, then for each $F=\{T_1,\cdots ,T_n\}\subseteq D$ with $n=2^k$ and $\varepsilon >0$ there exists a point $C=C(F,\varepsilon)\in D$ such that
$$d(T_i,C)\geq (1-\varepsilon)\gamma,$$
for each $i=1,\cdots, n$.
\end{lem}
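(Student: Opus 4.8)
The plan is to build the point $C$ by applying Lemma~\ref{mid} to the finite set $F$ and then exploiting the diametral hypothesis. Recall that every point of $D$ is diametral, so $r_a(D)=\gamma$ for all $a\in D$. The key idea is to use the averaging point $Q=Q(F)\in C(H,K)$ furnished by Lemma~\ref{mid}, which satisfies $d(Q,X)\leq \frac{1}{n}\sum_i d(T_i,X)$ for every $X\in C(H,K)$. However, there is a subtlety: Lemma~\ref{mid} only guarantees $Q\in C(H,K)$, not $Q\in D$, and we need the output point to lie in $D$. The natural fix is to observe that $D$ is admissible, hence an intersection of closed balls, and to check that $Q$ actually lands inside every such ball; I would verify this using the averaging inequality applied with $X$ ranging over the centers of those balls, since each $T_i\in D$ already lies in each defining ball.

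Once $Q\in D$ is secured, the strategy is to argue by contradiction against the conclusion. Suppose the conclusion fails for $C=Q$, meaning some $d(T_{i_0},Q)<(1-\varepsilon)\gamma$. The first step is to bound $d(Q,X)$ from above for a cleverly chosen $X\in D$: taking $X=T_{i_0}$ in the averaging inequality, together with $d(T_i,T_{i_0})\leq \gamma=\mathrm{diam}(D)$ for the remaining indices, I would obtain an estimate of the shape
\begin{equation}
r_Q(D)\ \le\ \frac{1}{n}\sum_i d(T_i,X)\nonumber
\end{equation}
that, combined with the strict deficit at $i_0$, pushes $r_Q(D)$ strictly below $\gamma$. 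But since $Q\in D$ is diametral, $r_Q(D)=\gamma$, a contradiction. So the heart of the proof is a careful bookkeeping of the averaging inequality: the point $Q$ cannot be too close to all of the $T_i$ simultaneously without contradicting diametrality.

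More concretely, I expect the cleanest route is to choose $X\in D$ realizing (or nearly realizing) the radius $r_Q(D)=\gamma$, so that there is some $X_0\in D$ with $d(Q,X_0)$ within $\varepsilon\gamma$ of $\gamma$; then the inequality $\gamma-\varepsilon\gamma\le d(Q,X_0)\le \frac{1}{n}\sum_i d(T_i,X_0)$ forces at least one summand $d(T_i,X_0)$ to exceed $(1-\varepsilon)\gamma$, and since $X_0\in D$ one would then relabel or select that witness. I would set $C$ to be the appropriate point extracted this way so that $d(T_i,C)\geq(1-\varepsilon)\gamma$ holds for every $i$ simultaneously, which may require taking $C=Q$ and reading the inequality in the contrapositive direction rather than picking a single $X_0$.

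The main obstacle I anticipate is precisely getting the inequality to deliver a \emph{uniform} lower bound over all $i=1,\dots,n$ at once, rather than for a single index. The averaging inequality naturally produces a lower bound on the \emph{average} distance, and converting this into a simultaneous lower bound on \emph{each} individual distance is where the diametrality of \emph{every} point of $D$ must be used in full force — not just at one point. I would therefore structure the argument so that the contradiction is extracted from $r_Q(D)=\gamma$ (valid because $Q$ is diametral) fed back into the averaging bound, ensuring that a deficit at any single $T_i$ propagates to a strict drop in $r_Q(D)$ below $\gamma$. Verifying that $Q\in D$ and handling the $2^k$ structure inherited from Lemma~\ref{mid} are the two technical points to watch, but neither should present real difficulty beyond careful use of admissibility.
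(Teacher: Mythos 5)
Your skeleton matches the paper's proof — take the averaging point $Q$ from Lemma~\ref{mid}, verify $Q\in D$ by testing the averaging inequality against the centers of the balls defining the admissible set $D$, and then exploit $r_Q(D)=\gamma$ — but you leave the decisive step unresolved and flirt with a version that cannot work. The suggestion to take $C=Q$ and argue by contradiction is a dead end: $Q$ is by construction \emph{close} to the $T_i$, since $d(T_i,Q)\le\frac{1}{n}\sum_j d(T_j,T_i)\le\frac{n-1}{n}\gamma$, so for small $\varepsilon$ the point $Q$ itself can never satisfy $d(T_i,Q)\ge(1-\varepsilon)\gamma$. The correct choice of $C$ is the \emph{far witness} $X_0$, and the "main obstacle" you flag — upgrading a bound on the average to a uniform bound on every summand — is resolved by a sharper choice of tolerance, which you do not make.

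Concretely: since $Q\in D$ is diametral, $r_Q(D)=\gamma$, so one may choose $C\in D$ with $d(Q,C)\ge\bigl(1-\frac{\varepsilon}{n}\bigr)\gamma$ (note $\varepsilon/n$, not $\varepsilon$). The averaging inequality gives $\bigl(1-\frac{\varepsilon}{n}\bigr)\gamma\le d(Q,C)\le\frac{1}{n}\sum_i d(T_i,C)$. Now if some single index $j$ had $d(T_j,C)<(1-\varepsilon)\gamma$, then, since every other term is bounded by $\operatorname{diam}(D)=\gamma$,
$$\frac{1}{n}\sum_i d(T_i,C)<\frac{1}{n}\bigl[(1-\varepsilon)\gamma+(n-1)\gamma\bigr]=\Bigl(1-\frac{\varepsilon}{n}\Bigr)\gamma,$$
contradicting the lower bound. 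Hence $d(T_i,C)\ge(1-\varepsilon)\gamma$ for \emph{every} $i$ simultaneously: a deficit of size $\varepsilon\gamma$ at even one index already drags the average below $(1-\varepsilon/n)\gamma$. This is exactly how the paper closes the argument; without the $\varepsilon/n$ calibration your version only produces a single good index, which is the gap you yourself identified but did not fill.
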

\begin{proof} By lemma \ref{mid}, there is a $Q$ such that $d(Q,X) \leq \frac{1}{n} \sum_i d(T_i,X)$, for all $X\in C(H,K)$. It is obvious that $Q\in D$. Since $Q$ is diametral, $r_Q(D)=\gamma$ and hence there is a $C\in D$ such that 
$$\gamma -\frac{\varepsilon}{n}\gamma\leq d(Q,C)\leq \gamma.$$
It follows that 
$$\gamma (1-\frac{\varepsilon }{n})\leq d(Q,C)\leq \frac{1}{n} \sum_i d(T_i,C).$$
If there exists a $j\leq n$ such that $d(T_j,C)<(1-\varepsilon)\gamma$, then 
$$\frac{1}{n} \sum_i d(T_i,C)<\frac{1}{n}[(1-\varepsilon)\gamma +(n-1)\gamma ]=(1-\frac{\varepsilon}{n})\gamma.$$
It is a contradiction, so $d(T_i,C)\geq (1-\varepsilon)\gamma,$ for each $i=1,\cdots, n$.
\end{proof}
The next theorem is one of the technical tools for proving results on normal structure for $(C(H,K),d)$.
\begin{thm} \label{seq1}
If all points of an admissible subset $D$ of metric space $C(H,K)$, are diametral and $\gamma$ be the diameter of $D$, then $D$ contains a net $\{T_\alpha\}_{\alpha\in I}$, such that
$\lim_\alpha d(T_\alpha,X)=\gamma$
for each $X\in D$.   
\end{thm}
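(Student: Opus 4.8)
The plan is to build the net by indexing over finite subsets of $D$ paired with tolerances, reading off the required lower bound directly from Lemma \ref{seq0}. First I would dispose of the degenerate case $\gamma=0$: then $\mathrm{diam}(D)=0$ forces $D$ to be a single point, and any constant net works since $d(T_\alpha,X)=0=\gamma$. So assume $\gamma>0$.

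For the index set, let $I$ consist of all pairs $\alpha=(F,\varepsilon)$ where $F$ is a finite nonempty subset of $D$ and $\varepsilon>0$, partially ordered by declaring $(F_1,\varepsilon_1)\preceq(F_2,\varepsilon_2)$ whenever $F_1\subseteq F_2$ and $\varepsilon_1\geq\varepsilon_2$. This set is directed: given two indices, the pair $(F_1\cup F_2,\min\{\varepsilon_1,\varepsilon_2\})$ dominates both. For each $\alpha=(F,\varepsilon)$ I would apply Lemma \ref{seq0} (enlarging $F$ to a list of length a power of two by repeating an element, which affects neither its hypotheses nor the conclusion we use) to obtain a point $T_\alpha:=C(F,\varepsilon)\in D$ satisfying $d(T,T_\alpha)\geq(1-\varepsilon)\gamma$ for every $T\in F$.

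It then remains to verify $\lim_\alpha d(T_\alpha,X)=\gamma$ for a fixed $X\in D$. Since $X,T_\alpha\in D$ and $\gamma=\mathrm{diam}(D)$, we always have $d(T_\alpha,X)\leq\gamma$, so only the matching lower bound needs attention. Given $\delta>0$, choose $\varepsilon_0>0$ with $\varepsilon_0\gamma<\delta$ and set $\alpha_0=(\{X\},\varepsilon_0)$. For every $\alpha=(F,\varepsilon)\succeq\alpha_0$ we have $X\in F$ and $\varepsilon\leq\varepsilon_0$, so the defining property of $T_\alpha$ gives $d(X,T_\alpha)\geq(1-\varepsilon)\gamma\geq(1-\varepsilon_0)\gamma>\gamma-\delta$. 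Hence $|d(T_\alpha,X)-\gamma|<\delta$ for all $\alpha\succeq\alpha_0$, which is precisely convergence of the net to $\gamma$.

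The only genuine obstacle is conceptual rather than computational: a single net must witness the limit $\gamma$ for all $X\in D$ simultaneously. The device that secures this is indexing by finite subsets, so that any prescribed $X$ is eventually absorbed into $F$, after which Lemma \ref{seq0} pins $d(T_\alpha,X)$ to within $\varepsilon\gamma$ of the diameter. The padding to a cardinality $2^k$ is harmless bookkeeping, forced only because Lemmas \ref{mid} and \ref{seq0} are stated for $n=2^k$.
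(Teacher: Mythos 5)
Your proof is correct and follows essentially the same route as the paper: index by pairs $(F,\varepsilon)$ ordered by inclusion and decreasing tolerance, take $T_\alpha=C(F,\varepsilon)$ from Lemma \ref{seq0}, and squeeze $d(T_\alpha,X)$ between $(1-\varepsilon)\gamma$ and $\gamma$. Your extra care (the $\gamma=0$ case, the explicit directedness check, and padding $F$ to cardinality $2^k$) only tidies up details the paper handles implicitly by restricting the index set to subsets of size $2^k$.
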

\begin{proof} Let $I$ be the set of all pairs $\alpha=(F,\varepsilon)$, where $F=\{T_1,\cdots ,T_n\}\subseteq D$ with $n=2^k$ and $\varepsilon >0$. The set $I$ with the order given by 
\begin{center}
$\alpha_1=(F_1,\varepsilon_1) \leq \alpha_2=(F_2,\varepsilon_2)$ iff $F_1\subseteq F_2$ and $\varepsilon_1\geq \varepsilon_2,$
\end{center}
is a direct set. For $\alpha =(F,\varepsilon)$, we write $T_\alpha=C(F,\varepsilon)$ by lemma \ref{seq0}. 

It is clear that 
$$(1-\varepsilon)\gamma\leq d(T_\alpha ,X)\leq \gamma, $$
implies $\lim_\alpha d(T_\alpha,X)=\gamma$ for each $X\in D$.   
\end{proof}
\begin{defn}
A metric space $(M,\rho)$ has normal structure if $r(D)<$diam$(D)$ whenever $D\in \mathcal{A}(M)$ and diam$(D)>0$. 
\end{defn}
The assumption that $M$ has normal structure is equivalent to the following: If $D\in\mathcal{A} (M)$ consists of more than one point, then there is a number $r<$diam$(D)$ and a point $z\in D$ such that $D\subseteq B(z;r)$ \cite{kham}. So $M$ has normal structure if every admissible subset $D$ with more than one element, has a non-diametral point.  

The following is theorem 5.1 of \cite{kham}. We will use it, at the end of this section, to proving a fixed point theorem. This theorem is very useful to study the representation theory.  
\begin{thm}\label{thoper}
Suppose that a metric space $(M,\rho)$ has the normal structure and $\mathcal{A}(M)$ is compact. If a group of isometries of $M$ has a bounded orbit, then it has a fixed point. 
\end{thm}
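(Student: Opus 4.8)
The plan is to run the classical Kirk-type argument inside the admissible convexity structure: extract a minimal nonempty invariant admissible set from the bounded orbit, then collapse it to a single point using normal structure. Write $G$ for the group of isometries and let $G(x_0)=\{g(x_0):g\in G\}$ be a bounded orbit. First I would pass to the \emph{admissible hull} $D_0$, the intersection of all closed balls that contain $G(x_0)$. Because $G(x_0)$ is bounded it lies inside a single closed ball, so $D_0$ is a nonempty bounded member of $\mathcal{A}(M)$. Since each $g\in G$ is an isometry it sends a closed ball $B(z;r)$ to the closed ball $B(g(z);r)$ and it permutes the orbit; hence it permutes the family of balls defining $D_0$, and therefore $g(D_0)=D_0$. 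Thus $D_0$ is a nonempty bounded $G$-invariant admissible set.

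Next I would extract a minimal such set. Let $\mathcal{F}$ be the family of all nonempty $G$-invariant members of $\mathcal{A}(M)$ contained in $D_0$, partially ordered by inclusion. The intersection of any chain in $\mathcal{F}$ is again admissible (an intersection of closed balls), is $G$-invariant (an intersection of $G$-invariant sets), and is nonempty because $\mathcal{A}(M)$ is compact, i.e.\ descending chains of nonempty admissible sets have nonempty intersection. So every chain in $\mathcal{F}$ has a lower bound, namely its intersection, and Zorn's lemma yields a minimal element $D\in\mathcal{F}$.

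I then claim $D$ is a singleton. Suppose instead $\gamma=\mathrm{diam}(D)>0$. With $r(D)=\inf\{r_z(D):z\in D\}$, consider the center set
$$\mathcal{C}=\{z\in D:r_z(D)=r(D)\}=D\cap\bigcap_{x\in D}B(x;r(D)).$$
It is admissible, being an intersection of admissible sets. It is $G$-invariant: for an isometry $g$ and $G$-invariant $D$ one has $r_{g(z)}(D)=\sup_{y\in D}\rho(g(z),g(y))=r_z(D)$, so $z\mapsto r_z(D)$ is constant on $G$-orbits. It is nonempty: picking $s_n\downarrow r(D)$, the sublevel sets $D_{s_n}=D\cap\bigcap_{x\in D}B(x;s_n)$ are nonempty admissible sets forming a descending chain whose intersection is exactly $\mathcal{C}$, which is nonempty by compactness of $\mathcal{A}(M)$. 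Minimality of $D$ now forces $\mathcal{C}=D$, that is $r_z(D)=r(D)$ for every $z\in D$; taking the supremum over $z$ gives $\mathrm{diam}(D)=\sup_{z\in D}r_z(D)=r(D)$, which contradicts the normal-structure inequality $r(D)<\mathrm{diam}(D)$ valid since $\gamma>0$. Hence $\gamma=0$ and $D=\{p\}$ for some $p\in M$.

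Finally, $G$-invariance of the singleton $D=\{p\}$ means $g(p)=p$ for every $g\in G$, so $p$ is the sought common fixed point. The real work sits in the two uses of compactness: ensuring that the intersection along a chain stays nonempty so that Zorn applies, and ensuring that the center set $\mathcal{C}$ is nonempty, i.e.\ that the infimum defining $r(D)$ is genuinely attained on an admissible set. Both reduce to the compactness hypothesis on $\mathcal{A}(M)$; the remaining steps are isometry bookkeeping together with a direct appeal to the defining inequality of normal structure.
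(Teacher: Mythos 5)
Your argument is correct. The paper does not prove this theorem itself --- it simply quotes it as Theorem 5.1 of the cited Khamsi--Kirk book --- and your proof is exactly the standard Kirk-type argument that source uses: pass to the admissible hull of the bounded orbit, obtain a minimal nonempty $G$-invariant admissible set via Zorn's lemma and compactness of $\mathcal{A}(M)$, and rule out positive diameter by showing the Chebyshev-center set is a nonempty invariant admissible subset whose equality with the minimal set would contradict normal structure.
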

\begin{thm}\label{normal}
The metric space $(C(H,K),d)$ has normal structure.
\end{thm}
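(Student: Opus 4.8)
The plan is to argue by contradiction through the equivalent formulation of normal structure recorded just after the definition: it suffices to show that no admissible set $D$ with $\mathrm{diam}(D)=\gamma>0$ can consist entirely of diametral points. So suppose such a $D$ exists. First I would invoke Theorem \ref{seq1} to produce a net $\{T_\alpha\}_{\alpha\in I}\subseteq D$ with $\lim_\alpha d(T_\alpha,X)=\gamma$ for every $X\in D$. Then, since $K$ is finite dimensional, Lemma \ref{wot}(2) tells us that $\hat D\subseteq\mathcal B$ is WOT-compact, so after passing to a subnet I may assume $\hat T_\alpha\to\hat T_0$ in WOT for some $T_0\in D$. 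This gives a candidate ``limit centre'' inside $D$, and the whole difficulty is to show that its presence is incompatible with $D$ being diametral.

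Next I would normalize by transporting $T_0$ to the centre using the WOT-continuous $K_{\mathcal B}$-isometry $\psi_{T_0}$ from Lemma \ref{mobius}, whose WOT-continuity is guaranteed by Lemma \ref{wot}(1). Setting $\hat S_\alpha=\psi_{T_0}(\hat T_\alpha)$, I get $\hat S_\alpha\to\psi_{T_0}(\hat T_0)=0$ in WOT, while the identity $d(T_\alpha,T_0)=K_{\mathcal B}(\hat S_\alpha,0)=\tanh^{-1}\|\hat S_\alpha\|$ combined with $\lim_\alpha d(T_\alpha,T_0)=\gamma$ forces $\|\hat S_\alpha\|\to\rho:=\tanh\gamma>0$. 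Thus the reduced situation is a net in the ball of $L(K,H)$ that converges to $0$ weakly but whose norms remain bounded away from $0$. Here I would exploit $\dim K<\infty$ concretely: each $\hat S_\alpha$ is determined by the finitely many vectors $\hat S_\alpha e_1,\dots,\hat S_\alpha e_m$ (for an orthonormal basis of $K$), each tending to $0$ weakly in $H$, and the supremum defining $\|\hat S_\alpha\|$ is attained on the compact unit sphere of $K$.

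From this configuration I would extract a strictly non-diametral point of $D$, contradicting the standing assumption. The mechanism is a uniform-convexity estimate for $d$: for two indices far out in the net the operators $\hat S_\alpha,\hat S_{\alpha'}$ become asymptotically orthogonal (their Hilbert--Schmidt inner product $\sum_j\langle \hat S_\alpha e_j,\hat S_{\alpha'}e_j\rangle$ tends to $0$, since $K$ is finite dimensional and the vectors are weakly null), so the metric midpoint $Q_{\alpha\alpha'}\in D$ furnished by the first case of Lemma \ref{mid} should satisfy a strict improvement $r_{Q_{\alpha\alpha'}}(D)\le\gamma-\delta$ with $\delta>0$ independent of the tail; one reads this off the identity $d(X,-X)=2\,d(X,0)$ and the triangle inequality used in the proof of Lemma \ref{mid}, which is strict by a definite amount precisely when the relevant images are weakly small but norm-large. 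Such a bound contradicts $Q_{\alpha\alpha'}$ being diametral. I expect the main obstacle to be exactly this last step, because the operator norm on $L(K,H)$ is \emph{not} uniformly convex, so the required strictness cannot come from convexity of the norm and must instead be wrung out of the Kobayashi metric itself: one has to quantify, uniformly along the net, how far the inequality $K_{\mathcal B}(a,-a)\le K_{\mathcal B}(a,b)+K_{\mathcal B}(b,-a)$ stays from equality when $a$ is weakly small yet norm-large, in the spirit of the uniform convexity of the hyperbolic metric on the Hilbert ball. Turning the weak convergence and the radial formula $K_{\mathcal B}(0,x)=\tanh^{-1}\|x\|$ into such a quantitative gap is where the real work lies, and the finite dimensionality of $K$ — through reflexivity and WOT-compactness of the balls — is what makes both the weak-limit extraction and the asymptotic-orthogonality argument available.
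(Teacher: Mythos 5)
Your reduction matches the paper's exactly: contradiction hypothesis, the net from Theorem \ref{seq1}, WOT-compactness of $\hat D$ from Lemma \ref{wot}(2), passage to a WOT-convergent subnet, M\"obius transport of the limit to $0$ via Lemma \ref{mobius}, and the resulting configuration $\hat S_\alpha\to 0$ in WOT with $\|\hat S_\alpha\|\to\tanh\gamma>0$. But the decisive step --- turning this configuration into a contradiction --- is not carried out, and you say so yourself. That is a genuine gap, not a presentational one, and the mechanism you sketch for closing it is doubtful. You propose to show that the metric midpoint $Q_{\alpha\alpha'}$ of two far-out net elements satisfies $r_{Q_{\alpha\alpha'}}(D)\le\gamma-\delta$ uniformly over $D$; that is a uniform-convexity statement for $(C(H,K),d)$, i.e.\ for the Kobayashi metric on the operator ball $\mathcal B$. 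When $\dim K\ge 2$ that ball contains isometrically embedded bidiscs on which the Kobayashi metric is the maximum of two Poincar\'e metrics, which is not uniformly convex, so the ``definite amount of strictness'' in the triangle inequality from Lemma \ref{mid} cannot be extracted in general. The Hilbert-ball intuition you invoke only applies to the case $\dim K=1$.

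The paper closes the argument by a different and convexity-free route: it derives the contradiction from the \emph{diameter} bound rather than from a non-diametral midpoint. After the reduction, one passes to a further subnet so that $\hat S_\alpha^*\hat S_\alpha\to\hat q$ in norm (possible because these live on the finite-dimensional $K$), with $\|\hat q\|=\lambda^2$, $\lambda=\tanh\gamma$. Fixing a far-out index $\beta$ with $\|\hat S_\beta^*\hat S_\beta-\hat q\|<\varepsilon$, one uses the explicit formula $\psi_{S_\beta}(\hat S_\alpha)=(I-\hat S_\beta\hat S_\beta^*)^{-1/2}(\hat S_\alpha-\hat S_\beta)(I-\hat S_\beta^*\hat S_\alpha)^{-1}(I-\hat S_\beta^*\hat S_\beta)^{1/2}$ together with the fact that $\hat S_\beta^*\hat S_\alpha\to 0$ in norm ($\hat S_\beta^*$ is finite rank and $\hat S_\alpha$ is weakly null --- this is the correct use of the ``asymptotic orthogonality'' you identified) to compute
$\lim_\alpha\|\psi_{S_\beta}(\hat S_\alpha)\|^2=\|\hat S_\beta^*\hat S_\beta+(I-\hat S_\beta^*\hat S_\beta)^{1/2}\hat q\,(I-\hat S_\beta^*\hat S_\beta)^{1/2}\|\approx 2\lambda^2-\lambda^4>\lambda^2$.
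Hence $\lim_\alpha d(T_\beta,T_\alpha)>\tanh^{-1}\lambda=\gamma=\mathrm{diam}(D)$, a contradiction. If you want to complete your write-up, you should replace the midpoint/uniform-convexity endgame with this direct two-point distance computation; everything you established up to that point feeds into it unchanged.
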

\begin{proof} Assume the contrary, that is, all points in an admissible subset $D$ of $C(H,K)$ are diametral and $\gamma=$diam$(D)>0$. By theorem \ref{seq1}, $D$ contains a net $\{T_\alpha\}_{\alpha\in I}$, such that
$\lim_\alpha d(T_\alpha,X)=\gamma$, for each $X\in D$. The net $\{\hat{T}_\alpha \}_{\alpha \in I}$ contains a weakly convergent subnet, because by lemma \ref{wot}(2), $\hat{D}$ is WOT-compact. To simplify the notation we assume that $\{\hat{T}_\alpha \}_{\alpha \in I}$ itself is WOT-converges to some operator $\hat{P}\in\hat{D}$. By lemma \ref{mobius}, lemma \ref{llt} and theorem \ref{iso}, there exists an isometric biholomorphic automorphism $\psi$ on $(\mathcal{B}, K_\mathcal{B})$ with $\psi(\hat{P})=0$. Put $\hat{S}_\alpha=\psi (\hat{T}_\alpha )$, so
\begin{eqnarray}
\gamma &=&\lim_\alpha d(T_\alpha ,P)= \lim_\alpha K_\mathcal{B}(\hat{T}_\alpha ,\hat{P})\nonumber\\
~&=&\lim_\alpha K_\mathcal{B}(\psi(\hat{T}_\alpha) ,\psi(\hat{P}))=\lim_\alpha K_\mathcal{B}(\hat{S}_\alpha ,0)\nonumber\\
~&=&=\lim_\alpha d(S_\alpha ,0)=\lim_\alpha \tanh^{-1}\|L_{S_\alpha}(0)R_0(S_\alpha)^{-1} \|\nonumber\\
~&=&\lim_\alpha \tanh^{-1}\|-S_\alpha^*(1+S_\alpha S_\alpha^*)^{- \frac{1}{2}} \|=\lim_\alpha \tanh^{-1}\|\hat{S}_\alpha \|. \nonumber
\end{eqnarray}
Put $\lambda=\tanh \gamma $. We have $\lambda^2=\lim_\alpha \|\hat{S}_\alpha\|^2=\lim_\alpha\|\hat{S}_\alpha^*\hat{S}_\alpha\|$. Since $\{\hat{S}_\alpha^*\hat{S}_\alpha \}$ is a bounded net of bounded operators on finite dimensional Hilbert space $K$, we can assume that $\hat{S}_\alpha^*\hat{S}_\alpha\rightarrow \hat{q}$ in strong operator topology. It is clear that $\hat{q}$ is a positive bounded operator on $K$ and $\|\hat{q}\|=\lambda^2$. 

For each $\varepsilon>0$, fix $\beta$ with $\|\hat{S}_\beta^*\hat{S}_\beta -\hat{q}\|<\varepsilon$. By lemma \ref{mobius}, we have
\begin{eqnarray}
L_{S_\beta}(S_\alpha)R_{S_\alpha}(S_\beta)^{-1}&=&\psi_{S_\beta}(\hat{S_\alpha})\nonumber\\
~&=&(I-\hat{S}_\beta\hat{S}_\beta^*)^{-\frac{1}{2}}[\hat{S}_\alpha -\hat{S}_\beta][I-\hat{S}_\beta^* \hat{S}_\alpha]^{-1}(I-\hat{S}_\beta^*\hat{S}_\beta)^{\frac{1}{2}}.\nonumber
\end{eqnarray}

Since $\hat{S}_\beta^*$ is finite rank, $\hat{S}_\beta^*\hat{S}_\alpha\rightarrow 0$ in the norm topology. Hence
\begin{eqnarray}
\lim_\alpha \| L_{S_\beta}(S_\alpha)R_{S_\alpha}(S_\beta)^{-1} \| &=& \lim_\alpha \|(1-\hat{S}_\beta \hat{S}_\beta^*)^{- \frac{1}{2}}(\hat{S}_\beta-\hat{S}_\alpha)(1-\hat{S}_\beta^*\hat{S}_\beta)^{\frac{1}{2}} \| \nonumber\\
~&=& \lim_\alpha \| \hat{S}_\beta - (1-\hat{S}_\beta \hat{S}_\beta^*)^{- \frac{1}{2}}\hat{S}_\alpha (1-\hat{S}_\beta^*\hat{S}_\beta)^\frac{1}{2} \| \nonumber \\ 
~&=& \lim_\alpha \| \hat{S}_\beta - \hat{S}_\alpha (1-\hat{S}_\beta^*\hat{S}_\beta)^\frac{1}{2} \|. \nonumber
\end{eqnarray}

The second equality follows from the identity 
\vspace{0.3cm}

\leftline{$(1-\hat{S}_\beta \hat{S}_\beta^*)^{- \frac{1}{2}}(\hat{S}_\beta-\hat{S}_\alpha)(1-\hat{S}_\beta^*\hat{S}_\beta)^{\frac{1}{2}}+(1-\hat{S}_\beta \hat{S}_\beta^*)^{- \frac{1}{2}}\hat{S}_\alpha (1-\hat{S}_\beta^*\hat{S}_\beta)^\frac{1}{2}=$}
\rightline{$=(1-\hat{S}_\beta \hat{S}_\beta^*)^{- \frac{1}{2}}[(\hat{S}_\beta-\hat{S}_\alpha)+\hat{S}_\alpha](1-\hat{S}_\beta^*\hat{S}_\beta)^\frac{1}{2}=\hat{S}_\beta$,}
\vspace{0.2cm}
\leftline{and the last one holds since}
\vspace{0.3cm}
\leftline{$\|\hat{S}_\beta - (1-\hat{S}_\beta \hat{S}_\beta^*)^{- \frac{1}{2}}\hat{S}_\alpha (1-\hat{S}_\beta^*\hat{S}_\beta)^\frac{1}{2}-\hat{S}_\beta + \hat{S}_\alpha (1-\hat{S}_\beta^*\hat{S}_\beta)^\frac{1}{2}\|=$}
\vspace{-0.5cm}
\begin{eqnarray}
~&=&\| (1-\hat{S}_\beta \hat{S}_\beta^*)^{- \frac{1}{2}}\hat{S}_\alpha (1-\hat{S}_\beta^*\hat{S}_\beta)^\frac{1}{2}- \hat{S}_\alpha (1-\hat{S}_\beta^*\hat{S}_\beta)^\frac{1}{2}\|\nonumber\\
~&=&\| [(1-\hat{S}_\beta \hat{S}_\beta^*)^{- \frac{1}{2}}-I] \hat{S}_\alpha (1-\hat{S}_\beta^*\hat{S}_\beta)^\frac{1}{2}\|\nonumber\\
~&\leq & \| (1-\hat{S}_\beta \hat{S}_\beta^*)^{- \frac{1}{2}}-I\|\| \hat{S}_\alpha\|\| (1-\hat{S}_\beta^*\hat{S}_\beta)^\frac{1}{2}\|,\nonumber
\end{eqnarray}
and $\hat{S}_\alpha\rightarrow 0$ in WOT-topology. Therefore $\Lambda(\hat{S}_\alpha(x))\rightarrow 0$, for each $\Lambda\in H^*$ and  $x\in K$. By Hahn-Banach theorem,
$$\|\hat{S}_\alpha(x)\|=\sup\{|\Lambda(\hat{S}_\alpha(x))|:~\Lambda\in H^*, \|\Lambda\|\leq 1\}.$$
Indeed, for each $n$, there exists $\Lambda_n\in H^*$ such that
$$\|\hat{S}_\alpha(x)\|<|\Lambda_n(S_\alpha(x))|+\frac{1}{n}.$$
This implies that $\lim_\alpha\|\hat{S}_\alpha(x)\|<\frac{1}{n}$, for each $n$. Therefore $\lim_\alpha\|\hat{S}_\alpha(x)\|=0$, and hence $\lim_\alpha\|\hat{S}_\alpha\|=0$. Because $K$ is finite dimensional  and
$$\|\hat{S}_\alpha\|\leq\sum_i\|\hat{S}_\alpha(e_i)\|.$$
where $K=\langle e_1,\dots,e_l\rangle$.  Also, the equality
\vspace{0.3cm}

\leftline{ $\lim_\alpha \| [ \hat{S}_\beta - \hat{S}_\alpha (1-\hat{S}_\beta^*\hat{S}_\beta)^\frac{1}{2}]^*[\hat{S}_\beta - \hat{S}_\alpha(1-\hat{S}_\beta^*\hat{S}_\beta)^\frac{1}{2}] \| $ }
\rightline{$= \lim_\alpha \|  \hat{S}_\beta^*\hat{S}_\beta + (1-\hat{S}_\beta^*\hat{S}_\beta)^\frac{1}{2} \hat{S}_\alpha^*\hat{S}_\alpha(1- \hat{S}_\beta^*\hat{S}_\beta)^\frac{1}{2} \|$,} 
\vspace{0.2cm}
holds, because $\hat{S}_\alpha (1-\hat{S}_\beta^*\hat{S}_\beta)^{\frac{1}{2}}\rightarrow 0$ in WOT-Topology.
Hence, same as befor, the net
$$\hat{S}_\beta^* \hat{S}_\alpha (1-\hat{S}_\beta^*\hat{S}_\beta)^{\frac{1}{2}} +[\hat{S}_\alpha (1-\hat{S}_\beta^*\hat{S}_\beta)^{\frac{1}{2}}]^* \hat{S}_\beta $$
tends to zero in norm topology. Furthermore,
$$(1-\hat{S}_\beta^*\hat{S}_\beta)^\frac{1}{2} \hat{S}_\alpha^*\hat{S}_\alpha(1- \hat{S}_\beta^*\hat{S}_\beta)^\frac{1}{2}$$
tends in norm topology to $(1-\hat{S}_\beta^*\hat{S}_\beta)^\frac{1}{2} \hat{q}(1- \hat{S}_\beta^*\hat{S}_\beta)^\frac{1}{2}$. Therefore 
$$\lim_\alpha \| L_{S_\beta}(S_\alpha)R_{S_\alpha}(s_\beta)^{-1} \|^2= \|  \hat{S}_\beta^*\hat{S}_\beta + (1-\hat{S}_\beta^*\hat{S}_\beta)^\frac{1}{2} \hat{q}(1- \hat{S}_\beta^*\hat{S}_\beta)^\frac{1}{2} \|.  $$
%\begin{eqnarray}
%\lim_\alpha \| L_{S_\alpha}(S_\beta)R_{S_\beta}(s_\alpha)^{-1} \|^2 &=&~\nonumber\\
%~ &=& \lim_\alpha \|  \hat{S}_\beta^*\hat{S}_\beta + (1-\hat{S}_\beta^*\hat{S}_\beta)^\frac{1}{2} \hat{S}_\alpha^*\hat{S}_\alpha(1- \hat{S}_\beta^*\hat{S}_\beta)^\frac{1}{2} \| \nonumber \\ 
%~ &=& \|  \hat{S}_\beta^*\hat{S}_\beta + (1-\hat{S}_\beta^*\hat{S}_\beta)^\frac{1}{2} \hat{q}(1- \hat{S}_\beta^*\hat{S}_\beta)^\frac{1}{2} \|. \nonumber
%\end{eqnarray}
On the other hand, since $\|\hat{S}_\beta^*\hat{S}_\beta -\hat{q}\|<\varepsilon$, we have
\vspace{0.3cm}

\leftline{$\| \hat{S}_\beta^*\hat{S}_\beta + (1- \hat{S}_\beta^*\hat{S}_\beta)^{\frac{1}{2}}\hat{q}(1-\hat{S}_\beta^*\hat{S}_\beta)^{\frac{1}{2}}-[\hat{S}_\beta^*\hat{S}_\beta+(1-\hat{S}_\beta^*\hat{S}_\beta)\hat{S}_\beta^*\hat{S}_\beta]  \|=$}
\vspace{-0.5cm}
\begin{eqnarray}
~&\leq &\|(1- \hat{S}_\beta^*\hat{S}_\beta)^{\frac{1}{2}}\| ^2\|\hat{q}-hat{S}_\beta^*\hat{S}_\beta\|\nonumber\\
~&\leq &2\|\hat{q}-hat{S}_\beta^*\hat{S}_\beta\|<2\varepsilon.\nonumber
\end{eqnarray}
The inequalities $\lambda^2-\varepsilon<\|\hat{S}_\beta^*\hat{S}_\beta\|<\lambda^2+\varepsilon$ imply
\vspace{0.3cm}

\leftline{$\|\hat{S}_\beta^*\hat{S}_\beta+(1-\hat{S}_\beta^*\hat{S}_\beta)\hat{S}_\beta^*\hat{S}_\beta \|=$}
\vspace{-0.5cm}
\begin{eqnarray}
~&= & 2\|\hat{S}_\beta^*\hat{S}_\beta\|-\|\hat{S}_\beta^*\hat{S}_\beta\|^2\nonumber\\
~&\geq & 2(\lambda^2 -\varepsilon)-(\lambda^2+\varepsilon)^2\nonumber\\
~&\geq &\lambda^2 +2\varepsilon,\nonumber
\end{eqnarray}
the last inequality is satisfied if $\varepsilon$ is sufficiently small ($|\lambda|<1$). So

$$\lim_\alpha \| L_{S_\alpha}(S_\beta)R_{S_\beta}(S_\alpha)^{-1} \|^2>\lambda^2 +2\varepsilon-2\varepsilon=\lambda^2. $$
This implies that $\lim_\alpha d(T_\beta ,T_\alpha)=\lim_\alpha d(S_\beta , S_\alpha)>\gamma=$diam$(D)$. 

So we get a contradiction and hence the metric space $(C(H,K),d)$ has normal structure. 
\end{proof}
\begin{cor}
If a group of isometries of $(C(H,K),d)$ has a bounded orbit, then it has a fixed point. 
\end{cor}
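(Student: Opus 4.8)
The plan is to obtain this corollary as an immediate specialization of the abstract fixed point theorem, Theorem \ref{thoper}, applied to the concrete metric space $M=(C(H,K),d)$. Theorem \ref{thoper} guarantees a fixed point for any group of isometries with a bounded orbit, provided two structural hypotheses hold: that $M$ has normal structure, and that the family $\mathcal{A}(M)$ of admissible subsets of $M$ is compact. So the entire task reduces to checking that $(C(H,K),d)$ meets both hypotheses, and then invoking Theorem \ref{thoper} verbatim.

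Both hypotheses are in hand from the preceding development, so no new argument is required. First, the compactness of $\mathcal{A}(C(H,K))$ is exactly the content of Corollary \ref{comp}, which in turn rests on the WOT-compactness of $\hat{D}$ for each admissible $D$ established in Lemma \ref{wot}(2). Second, the normal structure of $(C(H,K),d)$ is precisely Theorem \ref{normal}, whose proof absorbed the genuine analytic difficulty of the paper (the net construction of Theorem \ref{seq1}, the strong-operator limit $\hat{S}_\alpha^*\hat{S}_\alpha\to\hat{q}$, and the contradiction $\lim_\alpha d(T_\beta,T_\alpha)>\mathrm{diam}(D)$). With these two facts recorded, the corollary is a one-line deduction.

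Accordingly, I would write the proof as: let $G$ be a group of isometries of $(C(H,K),d)$ possessing a bounded orbit. Since $(C(H,K),d)$ has normal structure by Theorem \ref{normal} and $\mathcal{A}(C(H,K))$ is compact by Corollary \ref{comp}, the hypotheses of Theorem \ref{thoper} are satisfied for $M=(C(H,K),d)$, and therefore $G$ has a fixed point. I do not anticipate any obstacle here; the only point meriting a remark is that the isometries in question are unrestricted as abstract $d$-isometries, which matches the generality in which Theorem \ref{thoper} is stated, so no compatibility check between the notion of isometry and the metric $d$ is needed. The substantive content lies entirely in the earlier results, and this corollary merely packages them into the form announced in the abstract, setting up the final application to groups of h-biholomorphic automorphisms.
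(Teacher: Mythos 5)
Your proposal is correct and coincides with the paper's own proof, which likewise deduces the corollary immediately from Theorem \ref{thoper} by citing Theorem \ref{normal} for normal structure and Corollary \ref{comp} for the compactness of $\mathcal{A}(C(H,K))$. No further comment is needed.
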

\begin{proof} It follows immediately from theorems \ref{normal}, \ref{thoper}, and corollary \ref{comp}. 
\end{proof}

\begin{defn}\label{hbi}
A map $\varphi$ on $C(H,K)$ is called h-biholomorphic if there exists a biholomorphic map $\psi$ on $\mathcal{B}$ such that $\psi(\hat{X})=\widehat{\varphi (X)}$, for all $X\in C(H,K)$.  
\end{defn}
\begin{ex}
Let $\eta$ be as in (\ref{linear transformation}). We define a transformation $\varphi$ on $C(H,K)$ setting $\varphi(X)=(1-\eta(\hat{X})^*\eta(\hat{X}))^{-\frac{1}{2}}\eta(\hat{X})^*$. It is obvious that $\eta(\hat{X})=\widehat{\varphi(X)}$, for all $X\in C(H,K)$. Hence, $\varphi$ is a h-biholomorphic automorphism on $C(H,K)$. 
\end{ex}
\begin{lem}\label{hbiiso}
Let $\varphi$ be a h-biholomorphic automorphism on $C(H,K)$, then 
$$d(\varphi(T),\varphi(S))=d(T,S),$$
for all $T,S \in C(H,K)$.
\end{lem}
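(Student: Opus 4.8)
The plan is to reduce the statement directly to the invariance of the Kobayashi metric under biholomorphic maps, which is Theorem \ref{iso}. The essential observation is that the metric $d$ on $C(H,K)$ was defined precisely so that it pulls back $K_{\mathcal{B}}$ through the correspondence $T \mapsto \hat{T}$; recall the chain of equalities established at the end of Section 2, which gives
$$d(T,S)=K_{\mathcal{B}}(\hat{T},\hat{S}) \qquad \text{for all } T,S\in C(H,K).$$
So the whole argument consists of threading the defining property of an h-biholomorphic map through this identity.

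First I would unwind Definition \ref{hbi}: since $\varphi$ is h-biholomorphic, there exists a biholomorphic map $\psi$ on $\mathcal{B}$ satisfying $\psi(\hat{X})=\widehat{\varphi(X)}$ for every $X\in C(H,K)$. Applying the identity above to the pair $\varphi(T),\varphi(S)$ and then substituting this relation gives
$$d(\varphi(T),\varphi(S))=K_{\mathcal{B}}\bigl(\widehat{\varphi(T)},\widehat{\varphi(S)}\bigr)=K_{\mathcal{B}}\bigl(\psi(\hat{T}),\psi(\hat{S})\bigr).$$
At this point I would invoke Theorem \ref{iso} with $F=\psi$: because $\psi$ is a biholomorphic self-map of the open unit ball $\mathcal{B}$ of $L(K,H)$, it preserves the Kobayashi distance, so $K_{\mathcal{B}}(\psi(\hat{T}),\psi(\hat{S}))=K_{\mathcal{B}}(\hat{T},\hat{S})$. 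Combining this with the defining identity once more yields $K_{\mathcal{B}}(\hat{T},\hat{S})=d(T,S)$, which completes the argument.

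Two minor points deserve a check, though neither is a genuine obstacle. One should note that $\hat{T},\hat{S},\widehat{\varphi(T)},\widehat{\varphi(S)}$ all lie in $\mathcal{B}$ so that $K_{\mathcal{B}}$ is being evaluated at legitimate points; this is guaranteed by the Remark preceding Lemma \ref{mobius}, which records that $\hat{X}\in\mathcal{B}$ whenever $X\in C(H,K)$ and $K$ is finite dimensional. Second, the substitution $\widehat{\varphi(X)}=\psi(\hat{X})$ is exactly the content of Definition \ref{hbi} and requires nothing further. Thus the only conceptual ingredient is the biholomorphic invariance of $K_{\mathcal{B}}$, and there is no hard step: the lemma is a clean corollary of Theorem \ref{iso} together with the construction of $d$. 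If one wished to make the statement self-contained, one could remark in passing that the example immediately preceding (built from the transformation $\eta$ of \eqref{linear transformation}) shows this class of isometries is nonempty.
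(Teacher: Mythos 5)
Your proposal is correct and follows exactly the same route as the paper: unwind Definition \ref{hbi} to replace $\widehat{\varphi(T)},\widehat{\varphi(S)}$ by $\psi(\hat{T}),\psi(\hat{S})$, use the identity $d(T,S)=K_{\mathcal{B}}(\hat{T},\hat{S})$, and conclude by the Kobayashi-invariance of biholomorphic maps (Theorem \ref{iso}). Nothing is missing.
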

\begin{proof}
By definition \ref{hbi}, there is a biholomorphic map $\psi$ on $\mathcal{B}$ such that $\psi(\hat{X})=\widehat{\varphi(X)}$, for all $X\in C(H,K)$. Now, theorem \ref{iso} implies that 
$$d(\varphi(T),\varphi(S))=K_\mathcal{B}(\widehat{\varphi(T)},\widehat{\varphi(S)})=K_\mathcal{B}(\psi(\hat{T}),\psi(\hat{S}))=K_\mathcal{B}(\hat{T},\hat{S})=d(T,S).$$
\end{proof}

Since, by lemma \ref{hbi}, h-biholomorphic mappings of $C(H,K)$ are isometric, we also get the following corollary.

\begin{cor}
If a group of h-biholomorphic automorphisms of $C(H,K)$ has at least one bounded orbit, then it has a fixed point.
\end{cor}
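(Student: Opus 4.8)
The plan is to reduce the statement to the abstract fixed point theorem, Theorem \ref{thoper}, which already packages exactly the metric-space hypotheses we have painstakingly verified for $(C(H,K),d)$. The only genuinely new ingredient needed is to recognize the group in question as a group of isometries; everything else is invoked off the shelf.

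First I would let $G$ denote the given group of h-biholomorphic automorphisms of $C(H,K)$ and observe, via Lemma \ref{hbiiso}, that each $\varphi\in G$ satisfies $d(\varphi(T),\varphi(S))=d(T,S)$ for all $T,S\in C(H,K)$. Thus every element of $G$ is an isometry of the metric space $(C(H,K),d)$, and $G$ is therefore a group of isometries in the precise sense required by Theorem \ref{thoper}. This identification is the conceptual heart of the corollary, though it is immediate once Lemma \ref{hbiiso} is in hand.

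Next I would collect the two structural properties of the ambient space. By Theorem \ref{normal}, $(C(H,K),d)$ has normal structure, and by Corollary \ref{comp}, the family $\mathcal{A}(C(H,K))$ of admissible sets is compact. With the hypothesis that $G$ possesses at least one bounded orbit, all three hypotheses of Theorem \ref{thoper} — a group of isometries, a bounded orbit, normal structure, and compactness of $\mathcal{A}(M)$ — are now in place. Applying Theorem \ref{thoper} directly yields a point of $C(H,K)$ fixed by every element of $G$, which is the desired conclusion.

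I do not anticipate a real obstacle here, since the substantive difficulties were already dispatched upstream: the delicate WOT-compactness estimates behind Corollary \ref{comp} and the contradiction argument establishing normal structure in Theorem \ref{normal}. If anything, the one point to state carefully is that the orbit hypothesis in the corollary is literally the bounded-orbit hypothesis of Theorem \ref{thoper}, so that no additional boundedness argument is required; the corollary is then a clean specialization of the abstract result to the isometry group coming from h-biholomorphic automorphisms.
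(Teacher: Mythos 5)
Your proposal is correct and follows essentially the same route as the paper: the paper also observes via Lemma \ref{hbiiso} that h-biholomorphic automorphisms are isometries and then invokes the preceding corollary, i.e.\ Theorem \ref{thoper} together with Theorem \ref{normal} and Corollary \ref{comp}. Nothing further is needed.
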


% ------------------------------------------------------------------------

% ------------------------------------------------------------------------
\end{document}